\documentclass[11pt]{amsart}
\usepackage{color}
\usepackage{stmaryrd}
\usepackage{cases}
\usepackage{amssymb}
\usepackage{amsmath}
\usepackage{amsfonts}
\usepackage{graphicx}
\usepackage{amsmath,amstext,amsbsy,amssymb, color}
\usepackage[colorlinks=true, linkcolor=blue, citecolor=magenta, urlcolor=blue, backref=page]{hyperref}

\hypersetup{colorlinks,citecolor=blue,filecolor=black,linkcolor=blue,urlcolor=blue}
\usepackage{latexsym}
\usepackage{amsmath}
\usepackage{amssymb}
\usepackage{bm}
\usepackage{graphicx}
\usepackage{wrapfig}
\usepackage{fancybox}
\usepackage{ulem}

\newtheorem{theorem}{Theorem}[section]
\newtheorem{lemma}[theorem]{Lemma}
\newtheorem{proposition}[theorem]{Proposition}
\newtheorem{corollary}[theorem]{Corollary}
\theoremstyle{definition}

\theoremstyle{remark}
\newtheorem{remark}[theorem]{Remark}

\numberwithin{equation}{section} \errorcontextlines=0

\newcommand{\diag}{\mbox{diag}}

\newcommand{\ZZ}{\mathbb Z}

\newcommand{\Imm}{\mathrm{Imm}}

\newcommand{\ot}{\otimes}

\newcommand{\si}{\sigma}
\newcommand{\GL}{\mathrm{GL}}
\newcommand{\SL}{\mathrm{SL}}
\newcommand{\gl}{\mathfrak{gl}}

\newcommand{\Mat}{\mathrm{Mat}}
\newcommand{\End}{\mathrm{End}}
\newcommand{\Ber}{\mathrm{Ber}}

\newcommand{\Ec}{\mathcal{E}}
\newcommand{\Tc}{\mathcal{T}}
\newcommand{\bal}{\begin{aligned}}
\newcommand{\eal}{\end{aligned}}
\newcommand{\beq}{\begin{equation}}
\newcommand{\eeq}{\end{equation}}
\newcommand{\ben}{\begin{equation*}}
\newcommand{\een}{\end{equation*}}
\newcommand{\Sym}{\mathfrak S}
\newcommand{\CC}{\mathbb{C}}

\begin{document}
	
	\title[Super-immanants and  Littlewood correspondences]
	{Super-immanants and Littlewood correspondences}
	
	\author{Naihuan Jing}
	\address{Department of Mathematics, North Carolina State University, Raleigh, NC 27695, USA}
	\email{jing@ncsu.edu}
	
		\author{Yinlong Liu}
	\address{Department of Mathematics, Shanghai University, Shanghai 200444, China}
	\email{yinlongliu@shu.edu.cn}
	
	\author{Jian Zhang}
	\address{School of Mathematics and Statistics,
		Central China Normal University, Wuhan, Hubei 430079, China}
	\email{jzhang@ccnu.edu.cn}
	
	\thanks{{\scriptsize
			\hskip -0.6 true cm MSC (2020): Primary: 17A70 Secondary: 17B35, 15A15, 05E05, 05E10
    \newline Keywords: super-immanants, Lie superalgebra, Schur-Weyl duality, Littlewood correspondences.}}
	

\begin{abstract}
In this paper, we introduce the notion of super-immanants for supermatrices  over a supercommutative algebra. Using the super Schur-Weyl duality we show that the super immanants play a significant role in
covariant tensor representations of the general linear Lie superalgebra. Among various things, we obtain a supertrace formula for super-immanants, which generalizes Kostant's trace formula to the super setting. Furthermore, we show that the Littlewood correspondences between super-immanants and supersymmetric polynomials establish an isomorphism between their corresponding algebras.
\end{abstract}
	\maketitle
\section{Introduction}
Supermatrices over a supercommutative algebra have been used in the study of supermanifolds, Lie supergroups, and Lie superalgebras \cite{Ber,Kos,Kac77,Le,KN84}. Their introduction was closely related to various physical problems \cite{Man, DM}.

The classical notion of immanants can be traced back to the seminal work of Schur \cite{S}. Littlewood \cite{Li, LR}
introduced these matrix functions as generalizations of determinants and permanents to formulate the Littlewood correspondences, which establish
a complete correspondence between representations of the general linear group and the symmetric group
at the basis level comparable to the Schur-Weyl duality. The Littlewood correspondences not only supplement the Schur-Weyl duality but also
reveal the close relationship with 
Gelfand-Tsetlin bases.
In this paper, using the primitive idempotents of the symmetric group, we study the super-immanants of supermatrices  associated with arbitrary representations of the symmetric group, which will provide a practical means to completely generalize the quantum Okounkov immanants.

One of the main results of this paper is to interpret super-immanants and to derive a supertrace formula via weight spaces of covariant tensor representations of general linear Lie superalgebra  $\gl_{m|n}$. In our previous work \cite{JLZ}, we upgraded Kostant's trace formula \cite{Ko} from
the $0$-weight space to arbitrary weight spaces for the special linear group $\SL(n,\CC)$ in the context of the  quantum coordinate algebra.

In this paper, we extend the approach in \cite{JLZ} by constructing  an exact correspondence between the Gelfand-Tsetlin type bases of the irreducible covariant tensor representations of $U(\gl_{m|n})$ and Young's orthonormal basis of irreducible representations of the symmetric groups.

It is well known that Schur symmetric polynomials are the characters of the irreducible representations of the symmetric groups and the general linear groups. Due to the close connection between Schur polynomials and immanants, known as the Littlewood correspondences \cite{Li},
many immanant identities can be deduced from representation theory of the symmetric and the general linear groups. On the other hand, the immanant identities provide exact numerical
relations in  the representation theory  in a spirit similar to the Schur-Weyl duality.

Thus, the second purpose of this paper is to construct the correspondences between super-immanants and  Schur supersymmetric polynomials, generalizing the Littlewood correspondences I-III to superalgebras. We also introduce a commutative subalgebra of the coordinate superalgebra  of  supermatrices and show that it is isomorphic to the algebra of supersymmetric polynomials.

It is noteworthy that Littlewood correspondence III  says that the Schur polynomials, with variables
 are the characteristic roots of the matrix $X$, can be explicitly expressed by the corresponding normalized immanants of suitably chosen generalized submatrices of  $X$.
 It is quite different in the setting of supermatrices. The characteristic functions (Berezinian) and eigenvalues of supermatrices were developed in \cite{KN88,KN90}.  Urrutia and  Morales  \cite{UM,UM2} proved the super Cayley-Hamilton Theorem and another version of super Cayley-Hamilton Theorem was found by \cite{KT1,KT2}. Later, Gurevich, Pyatov and Saponov \cite{GPS1,GPS2} generalized the result to the quantum super Cayley-Hamilton Theorem.
 Using  these techniques of characteristic functions (Berezinian) of supermatrices and the super Cayley-Hamilton Theorem,
 we obtain exactly a super analogue of Littlewood correspondence III.

As applications, we obtain super ananlogs of several classical immanant identities from the super Littlewood correspondences. In particular, we obtained the
 super versions of the Littlewood--Merris-Watkins identities \cite{Li,MW} and the Goulden-Jackson identities \cite{GJ}. Notably, the Goulden-Jackson identities  \cite{GJ} also give new interpretations of the Littlewood correspondences, MacMahon theorem \cite{Mac,VJ}, and Young idempotents.

The paper is organized as follows. In Section 2, we introduce the notion of super-immanants and give a representation-theoretic interpretation for super-immanants.
Section 3 is devoted to the super analogs of Littlewood correspondences I-III. We also show that the super Littlewood--Merris-Watkins identities and the super Goulden-Jackson identities.

\section{Super-immanants and super Schur-Weyl duality}
\subsection{Super-immanants}
Let $\CC^{m|n}$ be the complex super vector space spanned by its $\ZZ_2$-graded basis $\{e_1,\ldots,e_{m+n}\}$. For any $1\leq i\leq m+n$, the parity of $e_i$ is defined as
\[
\bar{i}=\left\{\begin{array}{cc}
0&\text{ if }1\leq i\leq m,\\
1&\text{ if }m< i\leq m+n.
\end{array}\right.
\]

Let $R=R_0\oplus R_1$ be a free supercommutative algebra over $\CC$ (such as a Grassmann algebra).
We donote by $\End(R)=\End(R)_0\oplus \End(R)_1$ the set of $\mathbb Z_2$-graded endomorphisms of $R^{m|n}:=R\ot \CC^{m|n}$.

In terms of the $\ZZ_2$-graded  basis of $\CC^{m|n}$, the even superalgebra $\End(R)_0$ can be identified with
all even square supermatrices over $R$ with dimension $(m+n)\times (m+n)$, thus we also write it as $\Mat_{m|n}(R)$. The group of
all even automorphisms of $R^{m|n}$ will be denoted by $\GL_{m|n}(R)$.
The coordinate superalgebra $A(\Mat_{m|n})$ is generated by $x_{ij}, 1 \leq i,j \leq m+n$, where the $x_{ij}$ are the coordinate functions on $\Mat_{m|n}(R)$ with parity $\bar i +\bar j$.

The supertrace $str$ of a supermatrix $Y=(y_{ij})$ is
\[
str(Y)=\sum_{i=1}^{m+n}(-1)^{\bar{i}}y_{ii}.
\]
For any  $a\in\{1,2,\ldots,r\}$ we will denote by $str_a$ the corresponding supertrace on the
superalgebra $\mathrm{End}((\mathbb{C}^{m|n})^{\ot r})$ which acts as $str$ on the $a$-th copy of $\mathrm{End}(\mathbb{C}^{m|n})$ and as the identity map on all
the other tensor factors.

It will be convenient to use a standard notation for the matrix coefficients $A_{j_1,\dots,j_r}^{i_1,\dots,i_r}$ of an even operator $A\in R\ot \mathrm{End}((\mathbb{C}^{m|n})^{\ot r})$ acting on the standard basis of  $(\mathbb{C}^{m|n})^{\ot r}$.
We denote
\begin{equation}
  A=\sum_{I,J}A_{j_1,\dots,j_r}^{i_1,\dots,i_r}\ot e_{i_1j_1}\ot \cdots \ot e_{i_rj_r},
\end{equation}
where $e_{ij}$ be the standard basis of $\mathrm{End}(\mathbb{C}^{m|n})$.
Let $\{e_{i}^*\mid 1\leq i\leq m+n\}$ be the basis of $\mathbb (\mathbb C^{m|n})^*$ dual to the basis $\{e_{i} \mid 1\leq i\leq m+n\}$ of $\mathbb C^{m+n}$, then we can write the bases of $(\mathbb C^{m|n})^{\ot r}$  and its dual bases of $((\mathbb C^{m|n})^{\ot r})^{*}$  respectively:
\beq
\mid i_1,\dots,i_r\rangle =e_{i_1}\otimes\cdots \otimes e_{i_r}, \qquad \langle i_1,\dots ,i_r\mid=(e_{i_1}\otimes\cdots \otimes  e_{i_r})^*,
\eeq
with parity $\bar I=\bar i_1+\ldots +\bar i_r$.
Then the matrix coefficients of the even  operator $A$ are given by
\beq
A_{j_1,\ldots,j_r}^{i_1,\ldots,i_r}=(-1)^{\gamma(I,J)}\langle i_1,\ldots ,i_r\mid A \mid j_1,\ldots,j_r\rangle,
\eeq
where
$\gamma(I,J)=\sum_{a} \bar i_a(\bar j_a+1)+\sum_{a<b}  \bar j_b(\bar i_a +\bar j_a)$. In particular,
\beq
\bal
A_{i_1,\ldots,i_r}^{i_1,\ldots,i_r}&=\langle i_1,\ldots ,i_r\mid A \mid i_1,\ldots,i_r\rangle,\\
str_{1,\ldots,r}(A)&=\sum_{I}(-1)^{\bar I}\langle i_1,\ldots ,i_r\mid A \mid i_1,\ldots,i_r\rangle.
\eal
\eeq

Let $\Sym_r$ be the symmetric group of degree $r$. There is a representation $\rho$ of $\Sym_r$ on $(\CC^{m|n})^{\ot r}$ given by
\beq
\si \cdot e_{i_1}\ot \cdots \ot e_{i_r}=(-1)^{\bar{I}_{\si}}e_{i_{\si^{-1}(1)}}\ot \cdots \ot e_{i_{\si^{-1}(r)}},
\eeq
for any  $\si \in \Sym_r$, where
\ben
\bar{I}_{\si}=\sum_{k<l, \atop \si(k)> \si(l)}\bar i_k \bar i_l.
\een
For any $1 \leq a<b \leq r$,  we define
\beq\label{P}
P_{(a,b)}=\sum_{i,j=1}^{m+n}(-1)^{\bar{j}}1^{\ot (a-1)} \ot e_{ij}\ot 1^{\ot (b-a-1)}\ot e_{ji} \ot 1^{\ot (r-b)},
\eeq
Then $\rho((i,i+1))=P_{(i,i+1)}$ and we denote $\rho(\si)=P_{\si}$.

Let  $I=(i_1,\ldots, i_k)$ and $J=(j_1, \ldots ,j_k)$ be two multisets ($k$-tuples) of $[m+n]$.   We denote by $X^I_J$ the generalized submatrix of $X$ whose row (resp. column) indices belong to  $I$ (resp. $J$). If $I=J$, we briefly write $X_I$.

Let $V$ be any representation of $\Sym_r$ with the character $\chi ^{V}$.
The {\it super-immanant} of $X^I_J$ associated to the representation $V$ is defined by
\begin{equation}\label{imm-def}
  \Imm_{\chi ^{V}}(X^I_J)=(-1)^{\sum_{k=1}^r\bar i_k\bar j_k}\langle i_{1},\ldots ,i_{r}\mid \chi^{V}X_1\cdots X_{r} \mid j_{1},\ldots,j_{r}\rangle.
\end{equation}

A partition $\lambda = (\lambda_1 \geq \ldots \geq \lambda_{m+n})$ is a non-increasing sequence of nonnegative integers. If $\lambda_1 + \cdots + \lambda_{m+n}=r$, we say that $\lambda$ is a partition of $r$, denoted as $\lambda \vdash r$. Denote by $H(m,n)$ the set of partitions $\lambda$ such that $\lambda_{m+1}\leq n$ and let
\begin{equation}\label{e:partitionH}
H(m,n;r)=\{\lambda\in H(m,n)\mid \lambda \vdash r\}.
\end{equation}
The set of standard Young
tableaux $\Tc$ of shape  $\lambda$ is denoted by $\mathrm{SYT}(\lambda)$. Let $c_k(\Tc)=j-i$ be the content of the  box $(i,j)$ of $\lambda$ occupied by $k$ in $\Tc\in \mathrm{SYT}(\lambda)$. And write $d_i(\Tc)=c_{i+1}(\Tc)-c_i(\Tc)$.

\begin{proposition}\label{imm-char}
  Let $I=(i_1, \ldots , i_r)$ be an ordered  multiset of $[m+n]$ and $X$ be the generator supermatrix of $A(\Mat_{m|n})$. Then for any $\lambda\vdash r$,
  \begin{equation}\label{imm-XI}
    \Imm_{\chi^{\lambda}}(X_I)=(-1)^{\bar I}\sum_{\si\in \Sym_r}\langle i_{\si(1)},\ldots, i_{\si(r)}\mid  \mathcal{E}_{\Tc}^{\lambda} X_1\cdots X_{r} \mid i_{\si(1)},\ldots, i_{\si(r)}\rangle,
  \end{equation}
where $\mathcal{E}_{\Tc}^{\lambda}$ be the primitive idempotent   of $\Sym_r$ associated with a  standard Young tableau ${\Tc}$ of shape $\lambda$.
 Moreover when $\lambda \notin H(m,n;r)$, $\Imm_{\chi^{\lambda}}(X_I)=0$.
\end{proposition}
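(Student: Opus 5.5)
The plan is to rewrite the central element $\chi^{\lambda}=\sum_{g\in\Sym_r}\chi^{\lambda}(g)\,g$ of $\CC[\Sym_r]$ as an orbit sum of conjugates of the primitive idempotent $\mathcal{E}_{\Tc}^{\lambda}$. Then I move the permutations past $X_1\cdots X_r$ and onto the basis vectors.

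\emph{Step 1: group-algebra identity.} I will show that
\[
\sum_{\si\in\Sym_r}\si\,\mathcal{E}_{\Tc}^{\lambda}\,\si^{-1}=\sum_{g\in\Sym_r}\chi^{\lambda}(g)\,g .
\]
The left side is central, and it is $r!$ times the projection of $\mathcal{E}_{\Tc}^{\lambda}$ onto the center. That projection equals $\frac{\chi^{\lambda}(\mathcal{E}_{\Tc}^{\lambda})}{f^{\lambda}}e_{\lambda}=\frac{1}{f^{\lambda}}e_\lambda$, since $\mathcal{E}_{\Tc}^{\lambda}$ is a rank-one idempotent in the $\lambda$-block. Here $e_\lambda=\frac{f^\lambda}{r!}\sum_g\chi^\lambda(g)g$ is the central idempotent and $f^\lambda=\dim V^\lambda$. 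Combining the two gives the identity.

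\emph{Step 2: commuting with $X_1\cdots X_r$.} Next I use the super Schur--Weyl fact that $P_\si X_1\cdots X_r P_\si^{-1}=X_1\cdots X_r$ in $A(\Mat_{m|n})\ot\End((\CC^{m|n})^{\ot r})$. It suffices to check this for transpositions $P_{(a,a+1)}$, where it is the graded flip relation $P_{(a,a+1)}X_aX_{a+1}=X_{a+1}X_aP_{(a,a+1)}$, which holds because the $x_{ij}$ supercommute. Consequently
\[
\Imm_{\chi^\lambda}(X_I)=(-1)^{\bar I}\sum_{\si}\langle I|\,P_\si\,\mathcal{E}_{\Tc}^{\lambda}X_1\cdots X_r\,P_\si^{-1}|I\rangle ,
\]
where the prefactor $(-1)^{\sum_k\bar i_k\bar i_k}=(-1)^{\bar I}$ comes from the definition \eqref{imm-def}.

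\emph{Step 3: moving $P_\si$ onto the basis vectors.} By the defining action, $P_\si^{-1}|I\rangle=\pm|i_{\si(1)},\dots,i_{\si(r)}\rangle$, where the sign is of the form $(-1)^{\bar I_{\si^{-1}}}$ (up to the conventions used). Dually, $\langle I|P_\si=\pm\langle i_{\si(1)},\dots,i_{\si(r)}|$ with the same sign, because $\langle I|P_\si P_\si^{-1}|I\rangle=1$ and $P_\si^{-1}|I\rangle$ is a signed basis vector. The two signs therefore cancel, which yields \eqref{imm-XI}. This sign bookkeeping is the main obstacle: one must check that the super dual-basis pairing introduces no extra Koszul sign when an even operator such as $P_\si$ acts on the left of a bra. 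I will handle it by noting that $P_\si$ is even and maps basis vectors to signed basis vectors, so its transpose action on bras is read off from the same pairing.

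\emph{Step 4: vanishing.} By the super Schur--Weyl duality of Sergeev and Berele--Regev, $(\CC^{m|n})^{\ot r}\cong\bigoplus_{\lambda\in H(m,n;r)}V_\lambda\ot S^\lambda$ as a $\gl_{m|n}\times\Sym_r$-module. Hence $\rho(\mathcal{E}_{\Tc}^{\lambda})=0$ whenever $\lambda\notin H(m,n;r)$, since $\mathcal{E}_{\Tc}^{\lambda}$ lies in the $\lambda$-isotypic block of $\CC[\Sym_r]$. Every term in \eqref{imm-XI} then vanishes, and so $\Imm_{\chi^{\lambda}}(X_I)=0$.
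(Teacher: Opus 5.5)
Your argument is correct. For the identity \eqref{imm-XI} you follow the paper's route: write $\chi^\lambda=\sum_\si \si\Ec^\lambda_\Tc\si^{-1}$, move $P_{\si^{-1}}$ past $X_1\cdots X_r$, and cancel the equal signs coming from $P_{\si^{-1}}|I\rangle$ and $\langle I|P_\si$. The difference is that you justify steps the paper takes for granted: the central-projection computation, the invariance of $X_1\cdots X_r$, and the $\langle I|P_\si P_\si^{-1}|I\rangle=1$ trick for the signs.

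One small slip in Step 2. The relation $P_{(a,a+1)}X_aX_{a+1}=X_{a+1}X_aP_{(a,a+1)}$ holds for any even $X$ and does not need supercommutativity. Supercommutativity of the $x_{ij}$ is needed for the extra identity $X_aX_{a+1}=X_{a+1}X_a$, and you need that identity to conclude $P_{(a,a+1)}X_1\cdots X_rP_{(a,a+1)}=X_1\cdots X_r$.

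The vanishing statement is where you genuinely take a different route from the paper.
\begin{itemize}
\item You use the Berele--Regev/Sergeev decomposition \eqref{Schur-Weyl-decom}: no $V^\lambda$ with $\lambda\notin H(m,n;r)$ occurs in $(\CC^{m|n})^{\ot r}$. So the whole $\lambda$-block of $\CC[\Sym_r]$ acts by zero, including $\Ec^\lambda_\Tc$ and even $\chi^\lambda$ itself.
\item The paper argues directly. It fixes the row-reading tableau $\Tc_0$ and uses the fusion procedure and relation \eqref{right-act}. It shows $\Ec^\lambda_{\Tc_0}$ annihilates $e_{i_1}\ot\cdots\ot e_{i_r}$, because some even index repeats down a column or some odd index repeats along a row. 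It then transfers this to permuted tuples.
\end{itemize}

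What your route buys: it is shorter and fully rigorous given the cited theorem, which the paper itself recalls in the next subsection. It does not even need \eqref{imm-XI}, and it avoids the delicate bookkeeping of factor orderings in the fusion procedure.

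What the paper's route buys: it is self-contained, essentially reproving the hook-shape part of super Schur--Weyl duality. Its mechanism also gives a finer, tableau-by-tableau statement: $\Ec^\lambda_\Tc\cdot e_{i_1}\ot\cdots\ot e_{i_r}=0$ whenever $\theta_{\bm\mu}(\Tc)$ is not semistandard, even for $\lambda\in H(m,n;r)$. That finer statement is reused in Lemma~\ref{schur-weyl-corr}, and the isotypic-block argument alone cannot produce it.
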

\begin{proof}
Since $\chi^{\lambda}=\sum_{\si \in \Sym_r}\si \mathcal{E}_{\Tc}^{\lambda} \si^{-1}$,  by definition we have that
\begin{equation}
\bal
  \Imm_{\chi^{\lambda}}(X_I)&=(-1)^{\bar I}\sum_{\si\in \mathfrak{S}_r}\langle i_{1},\ldots ,i_{r}\mid P_{\si} \mathcal{E}_{\Tc}^{\lambda}P_{\si^{-1}}X_1\cdots X_{r} \mid i_{1},\ldots,i_{r}\rangle\\
  &=(-1)^{\bar I}\sum_{\si\in \mathfrak{S}_r}\langle i_{1},\ldots ,i_{r}\mid P_{\si} \mathcal{E}_{\Tc}^{\lambda}X_1\cdots X_{r} P_{\si^{-1}} \mid i_{1},\ldots,i_{r}\rangle.
 \eal
\end{equation}
Note that
\begin{align*}
P_{\si^{-1}}  \mid i_{1},\ldots,i_{r} \rangle=(-1)^{\bar I_{\si^{-1}}}\mid i_{\si(1)},\ldots, i_{\si(r)} \rangle,\\
\langle i_{1},\ldots ,i_{r}\mid P_{\si}=(-1)^{\bar I_{\si^{-1}}}\langle i_{\si(1)},\ldots, i_{\si(r)}\mid.
\end{align*}
Therefore,
\begin{equation*}
\bal
  \Imm_{\chi^{\lambda}}(X_I)&=(-1)^{\bar I}\sum_{\si\in \mathfrak{S}_r}\langle i_{1},\ldots ,i_{r}\mid P_{\si} \mathcal{E}_{\Tc}^{\lambda}X_1\cdots X_{r} P_{\si^{-1}} \mid i_{1},\ldots,i_{r}\rangle\\
  &=(-1)^{\bar I}\sum_{\si\in \Sym_r}\langle i_{\si(1)},\ldots, i_{\si(r)}\mid  \mathcal{E}_{\Tc}^{\lambda} X_1\cdots X_{r} \mid i_{\si(1)},\ldots, i_{\si(r)}\rangle.
\eal
\end{equation*}

If $\lambda \notin H(m,n;r)$, that is $\lambda_{m+1}>n$. Take the standard Young tableau $\Tc_0$ obtained by numbering the boxes by rows downwards, from left to right in every row. Then
\beq
\bal
\Imm_{\chi^{\lambda}}(X_I) &=(-1)^{\bar I}\sum_{\si\in \Sym_r}\langle i_{\si(1)},\ldots, i_{\si(r)}\mid  \mathcal{E}_{\Tc_0}^{\lambda} X_1\cdots X_{r} \mid i_{\si(1)},\ldots, i_{\si(r)}\rangle\\
&=(-1)^{\bar I}\sum_{\si\in \Sym_r}(\mathcal{E}_{\Tc_0}^{\lambda} X_1\cdots X_{r})_{i_{\si(1)},\ldots, i_{\si(r)}}^{i_{\si(1)},\ldots, i_{\si(r)}}\\
&=(-1)^{\bar I}\sum_{\si,\tau\in \Sym_r} (\mathcal{E}_{\Tc_0}^{\lambda})_{i_{\tau(1)},\ldots, i_{\tau(r)}}^{i_{\si(1)},\ldots, i_{\si(r)}} x_{i_{\tau(1)},i_{\si(1)}} \cdots x_{i_{\tau(r)},i_{\si(r)}}.
\eal
\eeq
Hence it is sufficient to prove that for  $I=(i_{1}\leq \ldots \leq  i_{r})$ and any $\tau \in \Sym_r$
\beq\label{suff-con}
\mathcal{E}_{\Tc_0}^{\lambda}\cdot e_{i_{\tau(1)}}\ot \cdots \ot e_{i_{\tau(r)}}=0.
\eeq

First  we show that
\beq
\mathcal{E}_{\Tc_0}^{\lambda}\cdot e_{i_1}\ot \cdots \ot e_{i_r}=0.
\eeq
Since $\lambda_{m+1}>n$ but there are at most $m$ different even factors  and $n$ different odd factors in $e_{i_1}\ot \cdots \ot e_{i_r}$, we have the following two cases: (i) there exists $1\leq k<j\leq r$ such that $1  \leq i_{k}=i_{j} \leq m$ and these boxes occupied by  $k$ and $j$ in $\Tc_0$ are in the same column and consecutive rows; (ii) there exists $1\leq k< r$ such that $m+1 \leq i_{k}=i_{k+1} \leq m+n$ and these boxes occupied by  $k,k+1$ in $\Tc_0$  are in the same row.

For case (i), recall the fusion procedure of primitive idempotents $\mathcal{E}_{\Tc}^{\lambda}$ \cite{Ju}:
\beq
h(\lambda)\mathcal{E}_{\Tc}^{\lambda}=\phi(z_1,\ldots,z_r)|_{z_1=c_1}\cdots |_{z_{r}=c_{r}},
\eeq
where $h(\lambda)$ is the hook length and denote
$$\phi(z_1,\ldots,z_r)=\mathop{\prod}\limits^{\rightarrow}_{1 \leq a<b \leq r}(1-\frac{P_{(a,b)}}{z_a-z_b}),$$
where the product is taken in the lexicographical order on the set of pairs $(a,b)$. By the fusion procedure and recurrence relations of primitive idempotents, we have that
\beq
\bal
&\Ec_{\Tc_0}^{\lambda}\cdot e_{i_1}\ot \cdots \ot e_{i_r}\\
&=Y_r\cdots Y_{j+1}\Ec_{\Tc^{-}}^{\mu} \cdot e_{i_1}\ot \cdots \ot e_{i_r}\\
&=Y_r\cdots Y_{j+1}\frac{1}{h(\mu)}\phi(z_1,\ldots,z_{j})|_{z_1=c_1}\cdots |_{z_{j}=c_{j}},
\eal
\eeq
where  $Y_k$ is a rational function in Jucys-Murphy element $y_k$ of $\Sym_r$, $j+1\leq k\leq r$,
and $\Tc^-$ is the standard tableau obtained from $\Tc_0$ by removing the boxes occupied by $j+1,\ldots,r$, denote by $\mu$ the shape of $\Tc^-$.

Then consider the last $j-k$ factors in $\phi(z_1,\ldots,z_{j})|_{z_1=c_1}\cdots |_{z_{j}=c_{j}}$, we have that
\beq
\bal
\mathop{\prod}\limits^{\rightarrow}_{k \leq t \leq j-1}(1-\frac{P_{(t,j)}}{c_t-c_j})\cdot e_{i_1}\ot \cdots \ot e_{i_r}&=const. (1-P_{(k,j)})\cdot e_{i_1}\ot \cdots \ot e_{i_r}\\
&=0.
\eal
\eeq

For case (ii), using the following relation from \cite[lemma 1.1.1]{Mo3}
\beq\label{right-act}
\mathcal{E}_{\Tc}^{\lambda}\cdot (P_{(a,a+1)}-\frac{1}{d_{a}(\Tc)})=\mathcal{E}_{\Tc}^{\lambda}P_{(a,a+1)}\mathcal{E}_{(a,a+1)\cdot\Tc}^{\lambda},
\eeq
and $\mathcal{E}_{(a,a+1)\cdot\Tc}^{\lambda}=0$ if the tableau $(a,a+1)\cdot \Tc$ is not standard. Then
\beq
\bal
&\mathcal{E}_{\Tc_0}^{\lambda}\cdot e_{i_1}\ot \cdots \ot e_{i_r}\\
=&-\mathcal{E}_{\Tc_0}^{\lambda}P_{(k,k+1)}\cdot e_{i_1}\ot \cdots \ot e_{i_r}\\
=&-\frac{1}{d_{k}(\Tc_0)}\mathcal{E}_{\Tc_0}^{\lambda}\cdot e_{i_1}\ot \cdots \ot e_{i_r}\\
=&-\mathcal{E}_{\Tc_0}^{\lambda}\cdot e_{i_1}\ot \cdots \ot e_{i_r}.
\eal
\eeq
Combining case  (i)  and  (ii), $\mathcal{E}_{\Tc_0}^{\lambda}\cdot e_{i_1}\ot \cdots \ot e_{i_r}=0$.

By the same argument, we can prove that for any $\Tc\in \mathrm{SYT}(\lambda)$,
\beq
\mathcal{E}_{\Tc}^{\lambda}\cdot e_{i_1}\ot \cdots \ot e_{i_r}=0.
\eeq
Then we show that equation \eqref{suff-con} by
\beq
\bal
&\mathcal{E}_{\Tc_0}^{\lambda}\cdot e_{i_{\tau(1)}}\ot \cdots \ot e_{i_{\tau(r)}}\\
=&(-1)^{\bar I_{\tau^{-1}}}\mathcal{E}_{\Tc_0}^{\lambda}P_{\tau^{-1}}\cdot e_{i_1}\ot \cdots \ot e_{i_r}.
\eal
\eeq
Using identity \eqref{right-act}, the above  equation can  express in terms of  $\mathcal{E}_{\Tc}^{\lambda}\cdot e_{i_1}\ot \cdots \ot e_{i_r}$, for some $\Tc\in \mathrm{SYT}(\lambda)$, so it is zero. Hence we obtain that $\Imm_{\chi^{\lambda}}(X_I)=0$, when $\lambda \notin H(m,n;r)$.
\end{proof}

\subsection{Super Schur-Weyl duality}
We review the double centralizer property  between the symmetric group and the general linear Lie superalgebra given by Berele and Regev \cite{BR1,BR2} and Sergeev \cite{Ser1,Ser2}. For more information, see \cite{CW}.

The general linear Lie superalgebra $\gl_{m|n}$ is spanned by the standard basis elements $E_{ij}, 1\leq i,j \leq m+n$. The $\mathrm{Z}_{2}$-grading of $\gl_{m|n}$ is defined by $E_{ij}\mapsto \bar{i}+\bar{j}$. The commutation relations are given by
\begin{equation}
[E_{ij},\ E_{kl}]=\delta_{kj}E_{il}-\delta_{il}E_{kj}(-1)^{(\bar{i}+\bar{j})(\bar{k}+\bar{l})}.
\end{equation}

We denote by $U(\gl_{m|n})$ the universal algebra of $\gl_{m|n}$.

The super vector space $(\CC^{m|n})^{\ot r}$ affords the following representation $\pi$ of $U(\gl_{m|n})$: for all $x\in \gl_{m|n}$,
\ben
x \cdot e_{i_1}\ot \cdots \ot e_{i_r}=\sum_{j=1}^r (-1)^{\bar x (\bar i_1+ \ldots + \bar i_{j-1})}e_{i_1}\ot \cdots \ot e_{i_{j-1}}\ot x e_{i_j} \ot e_{i_{j+1}}\ot \cdots \ot e_{i_r},
\een
which is the tensor representation of the fundamental one.

By the double centralizer property in \cite{BR1,BR2,Ser1,Ser2}, $(\CC^{m|n})^{\ot r}$ is a multiplicity free $U(\gl_{m|n})\times  \Sym_r $-module and there exists a decomposition of the space of tensors
\beq\label{Schur-Weyl-decom}
(\CC^{m|n})^{\ot r}\cong\bigoplus_{\lambda \in H(m,n;r)} U^{\lambda}\ot V^{\lambda},
\eeq
where $(\pi_{\lambda},U^{\lambda})$ is  the irreducible covariant tensor representation  for $U(\gl_{m|n})$ corresponding to  partition $\lambda$ and $(\rho_{\lambda},V^{\lambda})$ is the irreducible representation   of $\Sym_r$.
By a theorem of Kac \cite{Kac}, there must exist an integral dominant weight $\bm\lambda$ such that the highest weight module $L(\bm\lambda)$ is isomorphic to $U^{\lambda}$. Such a highest weight $\bm\lambda$ is called covariant highest weight.
The one to one correspondence between the covariant highest weights and partitions in $H(m,n)$ (cf. \eqref{e:partitionH}) is given as follows.
For any partition $\lambda$ in $H(m,n)$, the corresponding covariant highest weight $\bm\lambda=(\bm\lambda_1,\ldots, \bm\lambda_m \mid \bm\lambda_{m+1},\ldots, \bm\lambda_{m+n})$ is given by
\beq
\bal
\bm\lambda_{i}=\lambda_i, \ 1 \leq i \leq m,\\
\bm\lambda_{m+i}=\max\{0,\lambda'_i-m\}, \ 1\leq i \leq n,
\eal
\eeq
where $\lambda'$ is the partition conjugate to $\lambda$. Conversely if $\bm\lambda=(\bm\lambda_1,\ldots, \bm\lambda_m \mid \bm\lambda_{m+1},\ldots, \bm\lambda_{m+n})$ is a covariant highest weight
 then the components of $\lambda$ are given explicitly by
\beq
\bal
\lambda_i=\bm\lambda_i, \ 1\leq i \leq m,\\
\lambda_{m+i}= \sharp \{j \mid \bm\lambda_{m+j}\geq i, \ 1\leq j \leq n\}, \ 1\leq i \leq n.
\eal
\eeq

Let $\{v_{\Tc} \mid \Tc \in \mathrm{SYT}(\lambda)\}$ be the Young's orthonormal
basis of $V^{\lambda}$.
Define a nondegenerate symmetric  bilinear form $\langle \cdot | \cdot \rangle$ on  $(\CC^{m|n})^{\ot r}$ by
\beq\label{inner-product}
\bal
\langle e_{i_1}\ot \cdots \ot e_{i_r}, e_{j_1}\ot \cdots \ot e_{j_r}  \rangle&=\prod_{1 \leq k \leq r}\delta_{i_k,j_k}.
\eal
\eeq
We introduce the $*$-operations for $\Sym_r $  and $U(\gl_{m|n})$  respectively. Let  $\si^{*}=\si^{-1} \in \Sym_r$. And $E_{ij}^{*}=E_{ji}, 1 \leq i,j \leq m+n$ in $U(\gl_{m|n})$.  They are involutive anti-automorphisms.
\begin{lemma}\label{adjoint-op}
The $*$-operations for $\Sym_r$  and $U(\gl_{m|n})$  afford a contravariant  bilinear form, i.e.
\beq\label{contravariant-act}
\bal
  &\langle x\cdot  e_{i_1}\ot \cdots \ot e_{i_r}, e_{j_1}\ot \cdots \ot e_{j_r}  \rangle\\
  &=\langle  e_{i_1}\ot \cdots \ot e_{i_r}, x^* \cdot e_{j_1}\ot \cdots \ot e_{j_r}  \rangle,
\eal
\eeq
where $x$ is any element in $\Sym_r$ or $U(\gl_{m|n})$.
\end{lemma}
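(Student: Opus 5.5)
Since both $*$-operations are involutive anti-automorphisms and the bilinear form is symmetric, it suffices to verify \eqref{contravariant-act} on generators. If it holds for $x$ and $y$, then
\[
\langle xy\cdot u, v\rangle=\langle y\cdot u, x^*\cdot v\rangle=\langle u, y^*x^*\cdot v\rangle=\langle u,(xy)^*\cdot v\rangle,
\]
so it holds for $xy$. It holds trivially for scalars and extends by linearity. Hence for $\Sym_r$ it is enough to treat simple transpositions $s_a=(a,a+1)$. For $U(\gl_{m|n})$ it is enough to treat the $E_{ij}$, since $E_{ij}^*=E_{ji}$ is extended as an anti-automorphism.

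\emph{Transpositions.} By the defining formula for $\rho$, $s_a$ sends $e_{i_1}\ot\cdots\ot e_{i_r}$ to the tensor with the $a$-th and $(a+1)$-st factors swapped, with sign $(-1)^{\bar i_a\bar i_{a+1}}$. Then $\langle s_a\cdot e_I, e_J\rangle$ is nonzero only if $J=s_a(I)$, that is $j_a=i_{a+1}$, $j_{a+1}=i_a$ and the other indices agree. In that case its value is $(-1)^{\bar i_a\bar i_{a+1}}$. Likewise $\langle e_I, s_a\cdot e_J\rangle$ is nonzero under the same condition and equals $(-1)^{\bar j_a\bar j_{a+1}}=(-1)^{\bar i_{a+1}\bar i_a}$. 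The two sides agree, and $s_a^*=s_a$, so the case of transpositions is done. (For a general $\si$ one could also compare the signs $\bar I_{\si}$ and $\bar J_{\si^{-1}}$ with $J=\si(I)$ directly; these are equal because both count the inversion pairs of odd indices. The generator reduction avoids this.)

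\emph{Generators $E_{ij}$.} We have $E_{ij}\cdot e_I=\sum_{k:\,i_k=j}(-1)^{(\bar i+\bar j)(\bar i_1+\cdots+\bar i_{k-1})}e_{I'}$, where $I'$ is $I$ with the $k$-th entry replaced by $i$. Pairing with $e_J$ picks out the positions $k$ where $J$ agrees with $I$ outside $k$, $i_k=j$ and $j_k=i$; for a fixed pair $(I,J)$ there is at most one such $k$ when $i\neq j$. On the other side, $E_{ji}\cdot e_J$ replaces $j_k=i$ by $j$, with sign $(-1)^{(\bar i+\bar j)(\bar j_1+\cdots+\bar j_{k-1})}$. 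Since $j_l=i_l$ for $l<k$, the two signs coincide. When $i=j$, both sides are diagonal with the same eigenvalue, namely the number of occurrences of $i$ in $I$.

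I expect the only delicate point to be the sign bookkeeping for the $E_{ij}$. It works because the Koszul sign depends only on the parities of the factors preceding position $k$, and these factors are the same in $I$ and $J$. Note also that the form carries no parity signs and the involution has no sign twist such as $(-1)^{\bar i+\bar j}$. So one must check that this naive $*$ really matches the Koszul convention used in $\pi$. The computation above confirms that it does, so no supertwist is needed.
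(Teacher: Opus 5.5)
Your proof is correct. On the generators $E_{ij}$ it is the same computation as the paper's. Only the position $k$ where $I$ and $J$ differ contributes, and the Koszul signs agree because $I$ and $J$ coincide before position $k$.

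The two proofs differ in how the rest is organized.

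For $\Sym_r$, the paper does not reduce to simple transpositions. It takes an arbitrary $\si$ and computes $\langle P_\si e_I,e_J\rangle=(-1)^{\bar I_\si}$ and $\langle e_I,P_{\si^{-1}}e_J\rangle=(-1)^{\bar J_{\si^{-1}}}$ for $J=\si(I)$. It then matches the two signs by reindexing the inversion pairs. That route uses only the explicit formula for $P_\si$, not the fact that $\rho$ is multiplicative. Your route uses multiplicativity, which the paper asserts, and in exchange the sign check becomes trivial.

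For $U(\gl_{m|n})$, the paper checks only the generators and passes to the whole algebra tacitly. Your observation that $\langle xy\cdot u,v\rangle=\langle u,y^*x^*\cdot v\rangle$ supplies that missing step. It is exactly where the hypothesis that $*$ is an unsigned anti-automorphism enters.

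Your remark that no twist $(-1)^{\bar i+\bar j}$ is needed is also right. The paper's display writes the right-hand side with $(-1)^{\bar i_t+\bar j_s}E_{i_t,j_s}$, but that factor is absent from the expression it then computes. Keeping the factor would make the identity fail for odd $E_{ij}$. For example, with $m=n=r=1$ one gets $\langle E_{12}e_2,e_1\rangle=1=\langle e_2,E_{21}e_1\rangle$. One can also check that $E_{ij}\mapsto E_{ji}$, extended without signs, preserves the defining supercommutator relations. So $x^*$ is indeed well defined on $U(\gl_{m|n})$.
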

\begin{proof}
We denote $I=(i_1, \ldots , i_r)$ and $J=(j_1, \ldots , j_r)$.

For any $\si \in \Sym_r$ we have that
\ben
\bal
&\langle P_{\si}\cdot e_{i_1}\ot \cdots \ot e_{i_r}, e_{j_1}\ot \cdots \ot e_{j_r}  \rangle\\
&=(-1)^{\bar I_{\si}}\langle e_{i_{\si^{-1}(1)}}\ot \cdots \ot e_{i_{\si^{-1}(r)}}, e_{j_1}\ot \cdots \ot e_{j_r}  \rangle\\
&=(-1)^{\bar I_{\si}}\prod_{1 \leq k \leq r}\delta_{i_{\si^{-1}(k)},j_k},
\eal
\een
and
\ben
\bal
&\langle e_{i_1}\ot \cdots \ot e_{i_r}, P_{\si^{-1}}\cdot e_{j_1}\ot \cdots \ot e_{j_r}  \rangle\\
&=(-1)^{\bar J_{\si^{-1}}}\langle e_{i_1}\ot \cdots \ot e_{i_r}, e_{j_{\si(1)}}\ot \cdots \ot e_{j_{\si(r)}}  \rangle\\
&=(-1)^{\bar J_{\si^{-1}}}\prod_{1 \leq k \leq r}\delta_{i_{k},j_{\si(k)}}.
\eal
\een
Since  when $J=\si(I)$,
\ben
\bal
(-1)^{\bar I_{\si}}&=\prod_{k<t, \atop \si(k)> \si(t)}(-1)^{\bar i_k \bar i_t}=\prod_{k> t, \atop\si^{-1}(k)<\si^{-1}(t)}(-1)^{\bar i_{\si^{-1}(k)} \bar i_{\si^{-1}(t)}}\\
&=\prod_{k>t, \atop \si^{-1}(k) <\si^{-1}(t)}(-1)^{\bar j_k \bar j_t}=(-1)^{\bar J_{\si^{-1}}},
\eal
\een
so the bilinear form  on $(\CC^{m|n})^{\ot r}$ is contravariant  for $\Sym_r$.

For $x=E_{ji} \in \gl_{m|n}$, if $i \notin I$ or $j \notin J$, the left side of \eqref{contravariant-act} is zero. And the action of $E_{ji}^*$ on right side of \eqref{contravariant-act} is also trivial. Unless, for any  $1 \leq s,t \leq r$,
\ben
\bal
&\langle E_{j_s,i_t}\cdot e_{i_1}\ot \cdots \ot e_{i_r}, e_{j_1}\ot \cdots \ot e_{j_r}  \rangle\\
&=\sum_{k=1}^r(-1)^{(\bar j_s+\bar i_t) (\bar i_1+ \ldots + \bar i_{k-1})}\langle e_{i_1}\ot \cdots \ot e_{i_{k-1}}\ot E_{j_s,i_t}\cdot e_{i_k} \ot \cdots \ot e_{i_r}, e_{j_1}\ot \cdots \ot e_{j_r}  \rangle\\
&=\sum_{k=1}^r(-1)^{(\bar j_s+\bar i_t) (\bar i_1+ \ldots + \bar i_{k-1})} \delta_{i_{t},i_{k}}\delta_{j_{s},j_{k}}\prod_{p \neq k}\delta_{i_{p},j_{p}}.
\eal
\een
And
\ben
\bal
&\langle e_{i_1}\ot \cdots \ot e_{i_r}, (-1)^{\bar i_t+\bar j_s} E_{i_t,j_s}\cdot e_{j_1}\ot \cdots \ot e_{j_r}  \rangle\\
&=\sum_{k=1}^r(-1)^{(\bar j_s+\bar i_t) (\bar j_1+ \ldots + \bar j_{k-1})}\langle e_{i_1}\ot \cdots \ot e_{i_r}, e_{j_1}\ot \cdots \ot e_{j_{k-1}}\ot E_{i_t,j_s}\cdot e_{j_k} \ot \cdots \ot e_{j_r}  \rangle\\
&=\sum_{k=1}^r(-1)^{(\bar j_s+\bar i_t) (\bar j_1+ \ldots + \bar j_{k-1})} \delta_{j_{s},j_{k}}\delta_{i_{k},i_{t}}\prod_{p \neq k}\delta_{i_{p},j_{p}}.
\eal
\een
Hence the bilinear form on $(\CC^{m|n})^{\ot r}$ is contravariant  for  $U(\gl_{m|n})$.
\end{proof}
\subsection{Gelfand-Tsetlin type basis for covariant representations}
A supertableau $\Lambda$ of shape $\lambda \in H(m,n;r)$ is semistandard  if it satisfies the following conditions:

(a) the entries weakly increase from left to right along each row and down each column;

(b) the entries in $\{1,\ldots, m\}$ strictly increase down each column;

(c) the entries in $\{m + 1,\ldots, m+n\}$ strictly increase from left to right along each row.

We denote by $SSYT(\lambda)$ the set of semistandard supertableaux of shape $\lambda$. We say a supertableau $\Lambda$ is of weight $\mu$ if  $\mu_i$ equals the number of $i's$ in $\Lambda$.

Molev \cite{Mo2} constructed a basis $\xi_{\Lambda}$ parameterized by all semistandard supertableaux $\Lambda$ of shape $\lambda$ for the irreducible covariant tensor representation $U^{\lambda}\cong L(\bm\lambda)$ and gave the explicit action of the generators of $U(\gl_{m|n})$ in the basis.
On the other hand, Stoilova and van der Jeugt \cite{SJ} constructed Gelfand-Tsetlin type basis and provided different matrix element formulas of the generators of $U(\gl_{m|n})$ in the basis. Explicitly, they showed that
the set of vectors $\zeta_{\Lambda}=(\lambda_{ij})_{1 \leq j \leq i \leq m+n}$
satisfying the conditions
\begin{equation}
 \begin{array}{rl}
(1) & \lambda_{m+n,j}=\bm \lambda_j , \ 1\leq j\leq m+n;\\
(2)&\lambda_{pi}-\lambda_{p-1,i}\equiv\theta_{p-1,i}\in\{0,1\},\ 1\leq i\leq m, m+1\leq p\leq m+n;\\
(3)  &  \lambda_{pm}\geq \# \{i:\lambda_{pi}>0, m+1\leq i \leq p\}, \  m+1\leq p\leq m+n ;\\
(4)& \text{if }
\lambda_{m+1,m}=0, \text{then}\; \theta_{mm}=0; \\
(5)& \lambda_{pi}-\lambda_{p,i+1}\in{\mathbb Z}_+,\ 1\leq i\leq m-1,
    m+1\leq p\leq r-1;\\
(6)& \lambda_{i,j}-\lambda_{i-1,j}\in{\mathbb Z}_+\text{ and }\lambda_{i-1,j}-\mu_{i,j+1}\in{\mathbb Z}_+,\\
  & 1\leq j\leq i\leq m\text{ or } m+1\leq j\leq i\leq m+n.
 \end{array}
\end{equation}
constitutes a basis in $L(\bm\lambda)$.

In  \cite{Mo2}, Molev shows that each semistandard supertableau  is one-to-one corresponding to the pair of patterns which satisfying betweenness (or interlacing) conditions.

Moreover, these Gelfand-Tsetlin type basis $\zeta_{\Lambda}$ defined by Stoilova and van der Jeugt are also one-to-one corresponding to semistandard supertableaux $\Lambda$ such that for any $1 \leq i \leq m+n$,\\
if $1 \leq j\leq m$,
\beq\label{pattern-tableau-corr}
\bal
\lambda_{ij}& = \text{the number of entries }\leq i\text{ in the }j\text{th row of }\Lambda;
\eal
\eeq
if $1 \leq k\leq i-m \leq n$,
$\lambda_{i,m+k}=$ the number of entries $\leq i$  in the $k$th column of $\Lambda$ starting from $(m+1)$-th row to the last row.

Let $\hat{E}$  be the $(m+n)\times(m+n)$  matrix with coefficients in $U(\gl_{m|n})$ whose $ij$-th entry equals $\hat{E}_{ij}= (-1)^{\bar{j}}E_{ij}$.
The {\it  Berezinian} is an element in $U(\gl_{m|n})[[t]]$ defined by
\begin{equation}
\begin{split}
B_{m+n}(t)=\sum_{\si\in \Sym_{m}} sgn  (\si)(1+t\hat{E})_{\si(1),1}\cdots(1+t(\hat{E}-m+1))_{\si(m),m}
\\
\times\sum_{\tau\in \Sym_{n}}
 sgn(\tau)(1+t(\hat{E}-m+1))_{m+1,m+\tau(1)}^{-1}\cdots(1+t(\hat{E}-m+n))_{m+n,m+\tau(n)}^{-1}.
\end{split}
\end{equation}
Nazarov showed that the coefficients of the Berezinian belong to the center of $U(\gl_{m|n})$ \cite{Na91}. In fact, they generate the center of $U(\gl_{m|n})$ according to \cite{Gow}.

Let  $l_{i}=\bm\lambda_{i}-i+1$ for $i=1, \ldots, m,$
$l_{j}=-\bm\lambda_{j}+j-2m$ for $j=m+1, \ldots, m+n.$
Then $B_{m+n}(t)$  acts on $L(\bm \lambda)$ by the scalar \cite{Mo1,MRe}
\begin{equation}
\frac{(1+tl_{1})\cdots(1+tl_{m})}
{(1+tl_{m+1})\cdots(1+tl_{m+n})}.
\end{equation}
Denote $l_{ij}=\lambda_{ij}-j+1$ for $j=1, \ldots, m,$
$l_{ij}=-\lambda_{ij}+j-2m$ for $j=m+1, \ldots, m+n$. From the branching rules of $L(\bm\lambda)$, we can obtain that
\beq\label{central-char}
\bal
 &B_k(t)\zeta_{\Lambda}=\left(1+t l_{k1}\right)\cdots\left(1+t l_{kk}\right) \zeta_{\Lambda} \ \ \text{for} \ 1 \leq k \leq m,\\
 &B_k(t)\zeta_{\Lambda}=\frac{(1+tl_{k1})\cdots(1+tl_{km})}
{(1+tl_{k,m+1})\cdots(1+tl_{kk})} \zeta_{\Lambda} \ \ \text{for} \ m+1 \leq k \leq m+n.
\eal
\eeq

\subsection{Weight space interpretation of super-immanants}
We can decompose \eqref{Schur-Weyl-decom} into basis vectors:
\beq\label{Schur-Weyl-decom2}
(\CC^{m|n})^{\ot r}\cong\sum_{\lambda \in H(m,n;r)} \sum_{(\Lambda,\Tc)} \mathbb C \zeta_{\Lambda}\ot v_{\Tc}.
\eeq
 By Lemma \ref{adjoint-op}, these basis vectors in decomposition \eqref{Schur-Weyl-decom2} are normalized orthogonal, i.e.
$$\langle \zeta_{\Lambda}\ot v_{\Tc}, \zeta_{\Lambda'}\ot v_{\Tc'}\rangle =\delta_{\Lambda\Lambda'}\delta_{\Tc\Tc'}$$
for any semistandard supertableaux $\Lambda,\Lambda'$ and standard  Young tableaux $\Tc,\Tc'$.

An $(m+n)$-tuple $(a_1,\ldots,a_{m+n})$ in $\mathbb{Z}_{\geq 0}^{m+n}$ such that
$a_1+\cdots +a_{m+n}=r$ is called  a weak composition of $r$ into $(m+n)$ parts or a weak $(m+n)$-composition of $r$.
It's clear that
the weights of $U(\gl_{m|n})$ in $(\CC^{m|n})^{\ot r}$ are exactly weak $(m+n)$-compositions of $r$.
For any weak $(m+n)$-composition $\bm\mu$ of $r$, define the projection operator
$$\mathcal P_{\bm\mu}: (\CC^{m|n})^{\ot r}\rightarrow ((\CC^{m|n})^{\ot r})_{\bm \mu},$$
then $\mathcal P_{\bm \mu}: L(\bm\lambda)\rightarrow L(\bm\lambda)_{\bm \mu}$, where $( (\CC^{m|n})^{\ot r})_{\bm \mu}$ and $L(\bm\lambda)_{\bm \mu}$ are the subspaces with weight $\bm \mu$ as  $U(\gl_{m|n})$-modules.

By the bialgebra duality between $U(\gl_{m|n})$ and $A(\Mat_{m|n})$, the super tensor space $(\CC^{m|n})^{\ot r}$ has an $A(\Mat_{m|n})$-comodule structure given by
\beq
\bal
  \Delta: &  (\CC^{m|n})^{\ot r} \rightarrow A(\Mat_{m|n})\ot  (\CC^{m|n})^{\ot r}\\
  & e_{j_1}\ot \cdots \ot e_{j_r}\mapsto \sum_{(i_1,\ldots,i_r)}(-1)^{\sum\limits_{a<b}\bar i_a(\bar i_b+\bar j_b)} x_{i_1,j_1}\cdots x_{i_r,j_r} \ot e_{i_1}\ot \cdots \ot e_{i_r},
\eal
\eeq
where the sum is over all sequence $(i_1\ldots,i_r)$ of $[m+n]$.

Now we give a super-analog of general Kostant theorem \cite{Ko}, providing a weight space interpretation of super-immanants.
It is convenient to write $I=(1^{\alpha_1},\ldots,(m+n)^{\alpha_{m+n}})$ to specify the multiplicity $\alpha_i$ of $i$ in the non-decreasing multiset $I=(i_1\leq \ldots \leq i_r)$. Here $\alpha_i=\alpha_i(I)= \mathrm{Card}\{j\in I \mid j=i\}$.

Let  $$\alpha({I})=\alpha_1!\alpha_2 ! \cdots \alpha_{m+n} !$$
Denote by $$\mathfrak S_{I}=\mathfrak S_{\alpha_1}\times\mathfrak S_{\alpha_2} \times \cdots \times \mathfrak S_{\alpha_{m+n}}$$  the Young subgroup  of $\mathfrak{S}_r$, and $\mathfrak S_{\alpha_j}=1$ if $\alpha_j=0$.
\begin{theorem}\label{super-Kostant-thm}
  Let $\lambda \in H(m,n;r)$, $\bm\mu$ be  a weak  $(m+n)$-composition of $r$, and assume the multiset $I=(1^{\mu_1},2^{\mu_2},\ldots,(m+n)^{\mu_{m+n}})$ of $[m+n]$. Then
\beq\label{super-Kostant-iden}
\frac{\Imm_{\chi^{\lambda}}(X_I)}{\alpha(I)}=str(\mathcal P_{\bm\mu}\ot 1)\circ\Delta\circ \mathcal P_{\bm \mu}|_{L(\bm\lambda)}.
\eeq
\end{theorem}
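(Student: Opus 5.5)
Both sides of \eqref{super-Kostant-iden} can be computed in the orthonormal basis $\{\zeta_{\Lambda}\ot v_{\Tc}\}$ of \eqref{Schur-Weyl-decom2}. The left side, via Proposition \ref{imm-char}, becomes a sum of diagonal matrix coefficients of the operator $\mathcal E^{\lambda}_{\Tc}X_1\cdots X_r$ on the vectors $e_{i_{\si(1)}}\ot\cdots\ot e_{i_{\si(r)}}$. As $\si$ runs over $\Sym_r$, these vectors run over all standard basis vectors of the weight space $((\CC^{m|n})^{\ot r})_{\bm\mu}$, and each one is hit exactly $|\Sym_I|=\alpha(I)$ times. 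Dividing by $\alpha(I)$ therefore gives
\[
\frac{\Imm_{\chi^{\lambda}}(X_I)}{\alpha(I)}=(-1)^{\bar I}\sum_{J\in \Sym_r\cdot I}\langle J\mid \mathcal E^{\lambda}_{\Tc}X_1\cdots X_r\mid J\rangle .
\]
Since all $J$ in the orbit have the same parity $\bar J=\bar I$, this is precisely a supertrace over the weight-$\bm\mu$ subspace. Concretely, it equals $str\big((\mathcal P_{\bm\mu}\mathcal E^{\lambda}_{\Tc}\ot 1)\circ X_1\cdots X_r\circ \mathcal P_{\bm\mu}\big)$, read with the sign conventions of the matrix coefficients $A^{I}_{J}$ fixed earlier.

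Next, $X_1\cdots X_r$ has to be identified with the comodule map $\Delta$. Comparing the formula for $\Delta(e_{j_1}\ot\cdots\ot e_{j_r})$ with the expansion of $X_1\cdots X_r=\sum x_{i_1j_1}\cdots x_{i_rj_r}\ot e_{i_1j_1}\ot\cdots\ot e_{i_rj_r}$ shows that the matrix coefficients of $X_1\cdots X_r$ are the coefficients of $\Delta$. Two sign checks are needed here. The sign $(-1)^{\sum_{a<b}\bar i_a(\bar i_b+\bar j_b)}$ in $\Delta$ must match $\gamma(I,J)$ from the definition of matrix coefficients, and the prefactor $(-1)^{\sum\bar i_k\bar j_k}$ in \eqref{imm-def} must combine with these so that, on the diagonal $J=I$, everything reduces to $(-1)^{\bar I}\langle I|\cdot|I\rangle$, as in the displayed supertrace formula. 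The next point is that $\mathcal E^{\lambda}_{\Tc}$ commutes with $\Delta$, because the $\Sym_r$-action is a comodule endomorphism; this is the comodule form of the super Schur–Weyl duality. It also commutes with $\mathcal P_{\bm\mu}$, because $\Sym_r$ preserves weights.

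The remaining step is to evaluate the supertrace in the basis $\zeta_\Lambda\ot v_{\Tc'}$, which by Lemma \ref{adjoint-op} is orthonormal for the form \eqref{inner-product}. I choose $\Tc$ so that $\mathcal E^\lambda_{\Tc}$ is the projection onto $\bigoplus_{\Lambda}\CC\,\zeta_\Lambda\ot v_{\Tc}$; for Young's orthonormal basis $\mathcal E^\lambda_{\Tc}v_{\Tc'}=\delta_{\Tc\Tc'}v_{\Tc}$. This projection identifies that subspace with $L(\bm\lambda)$ as a comodule, and $\mathcal P_{\bm\mu}$ cuts out the $\zeta_\Lambda$ with $\Lambda$ of weight $\bm\mu$. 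The supertrace over the weight space, restricted by $\mathcal E^\lambda_{\Tc}$, then becomes $\sum_{\Lambda\text{ of weight }\bm\mu}(-1)^{|\zeta_\Lambda|}\langle\zeta_\Lambda\ot v_{\Tc}|\Delta|\zeta_\Lambda\ot v_{\Tc}\rangle$, which is $str(\mathcal P_{\bm\mu}\ot1)\circ\Delta\circ\mathcal P_{\bm\mu}|_{L(\bm\lambda)}$. The passage from the standard basis to the $\zeta\ot v$ basis is legitimate because the supertrace is basis-independent on a homogeneous subspace. It also uses that the change of basis is orthogonal (Lemma \ref{adjoint-op}), and that the weight space is homogeneous of parity $\bar I$, since parity is determined by the weight.

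I expect the main obstacle to be the sign bookkeeping. This means reconciling $\gamma(I,J)$, the sign in $\Delta$, and the prefactor in \eqref{imm-def} so that the diagonal coefficients carry exactly the supertrace sign. A related difficulty is justifying that the supertrace of the comodule structure map restricted to a weight space is well defined and independent of basis. I would attack the signs first, checking the case $r=2$ with one even and one odd index and then inducting on $r$ through the factorization $X_1\cdots X_r=(X_1\cdots X_{r-1})X_r$.
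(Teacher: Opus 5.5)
Your proposal is correct, and for the right-hand side it takes a genuinely different and more economical route than the paper.

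The paper evaluates the supertrace over $L(\bm\lambda)_{\bm\mu}$ through Lemma \ref{schur-weyl-corr}. It uses the vector-level identification $\sqrt{h(\lambda)/\alpha(I)}\,\Ec^{\lambda}_{\Tc}\cdot e_{i_1}\ot\cdots\ot e_{i_r}=c\,\zeta_{\theta_{\bm\mu}(\Tc)}\ot v_{\Tc}$, including the normalization $\sum_k|c_k|^2=1$ when several standard tableaux have the same image. This collapses the supertrace into the single coefficient $\frac{(-1)^{\bar I}h(\lambda)}{\alpha(I)}\langle(\mathcal P_{\bm\mu}\ot 1)\Delta\sum_{\Tc}\Ec^{\lambda}_{\Tc}\, e_{i_1}\ot\cdots\ot e_{i_r},\, e_{i_1}\ot\cdots\ot e_{i_r}\rangle$. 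The paper then replaces $h(\lambda)\sum_{\Tc}\Ec^{\lambda}_{\Tc}$ by $\sum_{\si}\chi^{\lambda}(\si)P_{\si}$ and matches the expansion of $\Delta$ term by term against the expansion of $\Imm_{\chi^{\lambda}}(X_I)$.

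You instead average over the $\Sym_r$-orbit of $I$ first (Proposition \ref{imm-char}). This turns the left side into the supertrace of $\Ec^{\lambda}_{\Tc}\circ\Delta$ over the whole weight space $((\CC^{m|n})^{\ot r})_{\bm\mu}$. After that you only need that $\Ec^{\lambda}_{\Tc}$ is an even idempotent commuting with $\Delta$ and $\mathcal P_{\bm\mu}$, with image the copy $U^{\lambda}\ot v_{\Tc}\cong L(\bm\lambda)$; any $\Tc$ works. This uses only the abstract decomposition \eqref{Schur-Weyl-decom} and the standard action of the Jucys--Murphy idempotents on Young's basis. It bypasses Lemma \ref{schur-weyl-corr}, which is the most delicate ingredient of the paper's proof.

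You do not even need the orthonormality from Lemma \ref{adjoint-op}. Any homogeneous basis of the weight space adapted to $\mathrm{im}\,\Ec^{\lambda}_{\Tc}\oplus\ker\Ec^{\lambda}_{\Tc}$ makes $\Delta$ block diagonal. The trace is unchanged under a change of basis with complex entries, since such matrices commute with the coefficients in $A(\Mat_{m|n})$. Because the weight space is homogeneous of parity $\bar I$, this also settles your concern about well-definedness. What the paper's route gives in addition is the finer basis correspondence of Lemma \ref{schur-weyl-corr}, which the theorem itself does not need.

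The sign bookkeeping you single out is immediate and needs no induction on $r$. Take $X=\sum_{i,j}x_{ij}\ot e_{ij}$ with the Koszul rule, and apply $X_r,\dots,X_1$ in turn to $1\ot e_{j_1}\ot\cdots\ot e_{j_r}$. The resulting sign exponent is $\sum_{a<b}(\bar i_a+\bar j_a)(\bar i_b+\bar j_b)+\sum_{a<b}\bar j_a(\bar i_b+\bar j_b)\equiv\sum_{a<b}\bar i_a(\bar i_b+\bar j_b)$, so $X_1\cdots X_r(1\ot e_J)=\Delta(e_J)$ exactly. Your argument then reads off only the diagonal coefficients $\langle J|\cdot|J\rangle$ of $\Ec^{\lambda}_{\Tc}X_1\cdots X_r$, and these are even. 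On the diagonal the prefactor of \eqref{imm-def} is $(-1)^{\bar J}$, which is precisely the supertrace sign. So $\gamma(I,J)$ never enters, and in any case $\gamma(J,J)\equiv 0$.
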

Before proving this theorem, we give
an explicit correspondence between the Gelfand-Tsetlin type bases of the  covariant representations of $U(\gl_{m|n})$ and
Young's orthonormal basis of  irreducible representations of $\Sym_r$.

 Given any weight $\bm\mu=(\bm\mu_1,\ldots,\bm\mu_{m+n})$, which can be written as a multiset $I=(1^{\mu_1} ,2^{\mu_2} ,\ldots ,(m+n)^{\mu_{m+n}})=(i_1, \ldots, i_r)$. We define  the following map
\beq
\bal
\theta_{\bm\mu}: SYT(\lambda) \rightarrow YT(\lambda)\\
\eal
\eeq
which maps any standard Young tableau $\Tc$ to a Young tableau $\theta_{\bm\mu}(\Tc)$ that replaces each node $k$ in $\Tc$  by $i_k$ for $1 \leq k \leq r$.
Note that $SSYT(\lambda)$ is contained in $YT(\lambda)$.
\begin{lemma}\label{schur-weyl-corr}
Let $\lambda \vdash r$ and $\bm\mu$ be a weak $(m+n)$-composition of $r$ written as the multiset  $I=(1^{\mu_1},2^{\mu_2},\ldots,(m+n)^{\mu_{m+n}})=(i_1 \leq \ldots \leq i_r)$ of $[m+n]$, let  $\mathcal{E}_{\Tc}^{\lambda}$ be the primitive idempotent   of $\Sym_r$ associated with ${\Tc}$. Then as a $ U(\mathfrak{gl}_{m|n}) \times \Sym_r $-module
\begin{align}\label{schur-weyl-iden1}
\sqrt{\frac{h(\lambda)}{\alpha(I)}}\mathcal{E}_{\Tc}^{\lambda}\cdot e_{i_1}\ot \cdots \ot e_{i_r}=\left\{\begin{array}{cc}
c\cdot \zeta_{\theta_{\bm\mu}(\Tc)}\ot v_{\Tc}&\text{ if } \theta_{\bm\mu}(\Tc)  \in SSYT(\lambda),\\
0&\text{ if }\theta_{\bm\mu}(\Tc)\notin  SSYT(\lambda),
\end{array}\right.
\end{align}
where $h(\lambda)$  is the hook length and $c$ is a nonzero constant. Explicitly,
$c=1$, if $\Tc$ is the unique pre-image of $\theta_{\bm\mu}(\Tc)$. And
$\sum_{i=1}^s||c_i||^2=1$ for $\theta_{\bm\mu}(\Tc_1)=\cdots=\theta_{\bm\mu}(\Tc_s)$ $(s>1)$.
In particular,
assume $r \leq m+n$, then
\beq\label{schur-weyl-iden2}
\sqrt{h_{\lambda}} \mathcal{E}_{\Tc}^{\lambda}\cdot e_1\ot \cdots \ot e_r=\zeta_{\Tc}\ot v_{\Tc} .
\eeq
\end{lemma}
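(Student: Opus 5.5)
The plan is to work in the isotypic decomposition \eqref{Schur-Weyl-decom2} and use that $\mathcal{E}_{\Tc}^{\lambda}$ acts only on the $\Sym_r$-factor. Since $\mathcal{E}_{\Tc}^{\lambda}$ is the primitive idempotent attached to $\Tc$, it acts on $V^{\lambda'}$ as zero for $\lambda'\neq\lambda$ and on $V^{\lambda}$ as the orthogonal projection onto $\CC v_{\Tc}$. Indeed, the Young orthonormal basis diagonalizes the Jucys–Murphy elements, and $\mathcal{E}_{\Tc}^{\lambda}$ is the projection onto the joint eigenvector with contents $c_k(\Tc)$. Hence $\mathcal{E}_{\Tc}^{\lambda}$ maps $(\CC^{m|n})^{\ot r}$ onto $U^{\lambda}\ot v_{\Tc}$. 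Because the $\Sym_r$-action commutes with $\gl_{m|n}$ and $e_{i_1}\ot\cdots\ot e_{i_r}$ has weight $\bm\mu$, we get
\[
\mathcal{E}_{\Tc}^{\lambda}\cdot e_{i_1}\ot\cdots\ot e_{i_r}=\sum_{\Lambda}a_{\Lambda}\,\zeta_{\Lambda}\ot v_{\Tc}.
\]
Here the sum runs over semistandard supertableaux $\Lambda$ of shape $\lambda$ and weight $\bm\mu$. The coefficients are $a_\Lambda=\langle e_{i_1}\ot\cdots\ot e_{i_r},\zeta_\Lambda\ot v_\Tc\rangle$, using Lemma \ref{adjoint-op} and $(\mathcal{E}_{\Tc}^{\lambda})^*=\mathcal{E}_{\Tc}^{\lambda}$, which follows from the fusion formula since each $P_{(a,b)}$ is self-adjoint.

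The central step is to identify which $\Lambda$ occur, by restriction along the chain $\gl_{1}\subset\gl_{2}\subset\cdots\subset\gl_{m|n}$, i.e. via the Gelfand–Tsetlin characterization \eqref{pattern-tableau-corr} and the Berezinian eigenvalues \eqref{central-char}. Since $I$ is non-decreasing, for each $k$ the first $\mu_1+\cdots+\mu_k$ tensor factors span a tensor power of $\CC^{k}$ (with the appropriate super grading), while the remaining factors carry indices $>k$. Applying the fusion/recurrence factorization used in the proof of Proposition \ref{imm-char}, $\mathcal{E}_{\Tc}^{\lambda}=Y_r\cdots Y_{p+1}\mathcal{E}^{\mu}_{\Tc^-}$, we see that the first $p=\mu_1+\cdots+\mu_k$ factors lie in the $\gl_{k}$-component labelled by the shape of $\Tc^-$, the subtableau of $\Tc$ with entries $\le p$. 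Through \eqref{pattern-tableau-corr}, this pins down the row $(\lambda_{k1},\dots,\lambda_{kk})$ of the pattern as the shape of the entries $\le k$ in $\theta_{\bm\mu}(\Tc)$. For this I will check that the Berezinian coefficients $B_k(t)$ act on $\mathcal{E}_{\Tc}^{\lambda}e_I$ by the scalar determined by that shape, using $B_k(t)$ expressed via Jucys–Murphy elements of $\Sym_p$ (super Schur–Weyl for $\gl_{k}$ or $\gl_{m|k-m}$). Running over all $k$ shows the only possible $\Lambda$ is $\theta_{\bm\mu}(\Tc)$. If $\theta_{\bm\mu}(\Tc)$ is not semistandard, some intermediate shape is not in the relevant hook set, so the vector vanishes. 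Alternatively one can argue directly, exactly as in cases (i),(ii) of Proposition \ref{imm-char}: two equal even entries in one column, or equal odd entries in one row, force zero via $(1-P_{(k,j)})$ or \eqref{right-act}.

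It remains to get the normalization. By orthonormality of \eqref{Schur-Weyl-decom2}, $\|\mathcal{E}_{\Tc}^{\lambda}e_I\|^2=\langle e_I,\mathcal{E}_{\Tc}^{\lambda}e_I\rangle$, which is the diagonal matrix coefficient of $\mathcal{E}_\Tc^\lambda$. I would compute this by summing over tableaux: $\sum_{\Tc}\mathcal{E}_\Tc^\lambda$ is the central idempotent $\frac{\dim V^\lambda}{r!}\chi^\lambda$. Its diagonal coefficient at $e_I$ equals $\frac{1}{h(\lambda)}\sum_{\si\in\Sym_I}\chi^\lambda(\si)(\pm1)$, where $\Sym_I$ is the stabilizer of $I$ and the sign comes from odd entries. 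This sum equals the number of semistandard supertableaux of weight $\bm\mu$ times $\alpha(I)/h(\lambda)$. To show this, I would use the supersymmetric Kostka number identity, i.e. the Frobenius reciprocity of the sign-twisted Young subgroup character. Combining with the per-$\Tc$ decomposition then forces the norms of the vectors for tableaux with the same image $\theta_{\bm\mu}(\Tc)$ to sum to $\alpha(I)/h(\lambda)$, which gives $c=1$ for a unique preimage and $\sum\|c_i\|^2=1$ in general. For $r\le m+n$ and $I=(1,\dots,r)$, $\theta$ is the identity and $\alpha(I)=1$, giving \eqref{schur-weyl-iden2}.

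I expect the main obstacle to be the identification step: showing rigorously that the fusion-procedure projection matches the Stoilova–van der Jeugt pattern row by row, with the correct signs, at the even/odd boundary $k=m$. The normalization bookkeeping is secondary.
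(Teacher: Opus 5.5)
\textbf{Identification step.} Here you take a genuinely different route from the paper, and it is workable. You run the Berezinian-eigenvalue argument directly on $e_{i_1}\ot\cdots\ot e_{i_r}$. This is valid because the subalgebra of $U(\gl_{m|n})$ on the first $k$ indices kills every factor with index $>k$ and commutes with the $Y_j$. The paper runs that argument only for $e_1\ot\cdots\ot e_r$. It then transports to general $I$ with the operators $E_i,F_i$, after embedding into $\gl_{m|r-m}$. Your version avoids having to control $E_i,F_i$ on Gelfand--Tsetlin vectors.

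One correction. A non-semistandard $\theta_{\bm\mu}(\Tc)$ need not produce an inadmissible intermediate shape. Take $m\ge 2$, $\lambda=(1,1)$, $I=(2,2)$. The intermediate shapes are $\emptyset,(1,1),(1,1),\dots$, all admissible, yet $\Ec^{(1,1)}(e_2\ot e_2)=0$. The vanishing comes from the strip condition, so you do need your fallback fusion argument, which is the one the paper uses.

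\textbf{The gap: normalization.} Summing over all $\Tc$ gives only the total
$\sum_{\Lambda}\sum_{\Tc\in\theta_{\bm\mu}^{-1}(\Lambda)}\|\Ec_{\Tc}^{\lambda}e_I\|^2=K\,\alpha(I)/h(\lambda)$,
where $K$ is the number of semistandard $\Lambda$ of shape $\lambda$ and weight $\bm\mu$. One identity for a total over $K$ fibres cannot force each fibre to contribute exactly $\alpha(I)/h(\lambda)$. So neither $c=1$ nor $\sum|c_i|^2=1$ follows.

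What is missing is a fibre-by-fibre computation, as in the paper.
\begin{itemize}
\item From $\Ec_{\Tc}^{\lambda}=\frac{1}{h(\lambda)}\sum_{\si}\langle \si^{-1}v_{\Tc},v_{\Tc}\rangle\si$ you get $\langle e_I,\Ec_{\Tc}^{\lambda}e_I\rangle=\frac{1}{h(\lambda)}\sum_{\si\in\Sym_I}\epsilon(\si)\langle\si v_{\Tc},v_{\Tc}\rangle$. Here $\epsilon$ is trivial on the blocks of $\Sym_I$ with index $\le m$ and is the sign on the blocks with index $>m$.
\item By Young's orthogonal form, $W_\Lambda=\mathrm{span}\{v_{\Tc}:\theta_{\bm\mu}(\Tc)=\Lambda\}$ is $\Sym_I$-stable. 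Indeed, $(j,j+1)$ with $i_j=i_{j+1}$ only mixes $v_{\Tc}$ and $v_{(j,j+1)\Tc}$, which have the same image.
\item $W_\Lambda$ is the outer product of the skew Specht modules of the strips $\Lambda^{(k)}/\Lambda^{(k-1)}$, where $\Lambda^{(k)}$ is the shape of the entries $\le k$.
\item For semistandard $\Lambda$ these are horizontal strips for $k\le m$ and vertical strips for $k>m$. They are therefore induced from trivial (resp.\ sign) characters of row (resp.\ column) segments, so $\epsilon$ occurs in $W_\Lambda$ exactly once.
\end{itemize}
Hence each fibre sum is $\frac{\alpha(I)}{h(\lambda)}\mathrm{tr}(\mathrm{pr}_\epsilon|_{W_\Lambda})=\frac{\alpha(I)}{h(\lambda)}$. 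In the unique-preimage case this is the statement that $v_{\Tc}$ itself transforms by $\epsilon$.

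Also, no norm identity gives $c_i\neq 0$ for each $\Tc_i$ in a fibre with $s>1$. Since $|c_i|^2=|\langle v_{\Tc_i},u_\Lambda\rangle|^2$ for the unit $\epsilon$-vector $u_\Lambda\in W_\Lambda$, you additionally need that $u_\Lambda$ has a nonzero coordinate on every $v_{\Tc_i}$.
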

\begin{proof}
(i) We first prove identity \eqref{schur-weyl-iden2}. It is clear that $\lambda \in H(m,n;r)$ when $r \leq m+n$.
By the super Schur-Weyl duality,  $\mathcal{E}_{\Tc}^{\lambda}$ is the projection operator from $(\CC^{m|n})^{\ot r} $ to $L(\bm\lambda)$. Since
\beq
\bal
 &\langle  \mathcal{E}_{\Tc}^{\lambda}\cdot e_1\ot \cdots \ot e_r, \mathcal{E}_{\Tc'}^{\lambda}\cdot e_1\ot \cdots \ot e_r\rangle\\
 &=\langle  \mathcal{E}_{\Tc'}^{\lambda}\mathcal{E}_{\Tc}^{\lambda}\cdot e_1\ot \cdots \ot e_r, e_1\ot \cdots \ot e_r\rangle\\
 &=\delta_{\Tc\Tc'}\langle  \mathcal{E}_{\Tc}^{\lambda}\cdot e_1\ot \cdots \ot e_r, e_1\ot \cdots \ot e_r\rangle\\
 &=\frac{1}{h(\lambda)}\delta_{\Tc\Tc'}.
\eal
\eeq
So
$\sqrt{h(\lambda)} \mathcal{E}_{\Tc}^{\lambda}\cdot e_1\ot \cdots \ot e_r\subset L(\bm\lambda)\ot v_{\Tc}$.

Moreover by identities \eqref{central-char}, $B_{k}(t), \ r\leq  k \leq m+n$ acts on  $\sqrt{h(\lambda)} \mathcal{E}_{\Tc}^{\lambda}\cdot e_1\ot \cdots \ot e_r$  as a scalar multiplication and only depends on highest weight $\bm\lambda$.
Consider the action of $B_{k}(t), 1 \leq k <r$, from the recurrence relation of $\mathcal{E}_{\Tc}^{\lambda}$ we have that
\beq
\bal
&B_{k}(t)\cdot \mathcal{E}_{\Tc}^{\lambda} e_1\ot \cdots \ot e_r\\
&=Y_r\cdots Y_{k+1} B_k(t)\cdot \mathcal{E}_{\Tc^-}^{\nu} e_1\ot \cdots \ot e_r,
\eal
\eeq
where $Y_{j}$ is a rational function in  Jucys-Murphy element $y_j$ of $\Sym_r$, $k+1 \leq j \leq r$, and $\Tc^-$ is the standard tableau obtained from $\Tc$ by removing the boxes occupied by $k+1,\ldots,r$, denote by $\nu$ the shape of $\Tc^-$.

The action of $ B_k(t)\cdot \mathcal{E}_{\Tc^-}^{\nu} e_1\ot \cdots \ot e_r$ is scalar and only depends on standard tableau $\Tc^-\in SYT(\nu) \subset SSYT(\nu)$ by identities \eqref{central-char} and correspondence \eqref{pattern-tableau-corr}. Hence
\beq
\sqrt{h(\lambda)} \mathcal{E}_{\Tc}^{\lambda}\cdot e_1\ot \cdots \ot e_r= \zeta_{\Tc}\ot v_{\Tc} .
\eeq

(ii) To prove \eqref{schur-weyl-iden1}, we first note that the argument of proposition \ref{imm-char} implies that if $\lambda \notin H(m,n;r)$,
\beq
\mathcal{E}_{\Tc}^{\lambda}\cdot e_{i_1}\ot \cdots \ot e_{i_r}=0.
\eeq
 Consider $\lambda \in H(m,n;r)$ below. If $\theta_{\bm \mu}(\Tc) \notin \mathrm{SSYT}(\lambda)$, then there exists  either one or both of the two cases:
\beq
\bal
&\text{(i)}\ p,q \ \text{are in the same column of}\ \Tc \ \text{and}\ i_p=i_q \ \text{with}\ \bar i_p=0;\\
&\text{(ii)}\ s,t \ \text{are in the same row of}\ \Tc \ \text{and} \ i_s=i_t \ \text{with} \ \bar i_s=1.
\eal
\eeq
We only prove the second case. It is similar to the proof of  proposition \ref{imm-char}.
 We can assume that $1 \leq s<t \leq r$ and content $c_s(\Tc)-c_t(\Tc)=-1$. So $i_s=i_{s+1}=\ldots=i_t$. Moreover, by the  recurrence relation and fusion procedure of $\Ec_{\Tc}^{\lambda}$, we have that
\beq
\bal
&\Ec_{\Tc}^{\lambda}\cdot e_{i_1}\ot \cdots \ot e_{i_r}\\
&=Y_r\cdots Y_{t+1}\Ec_{\Tc^{-}}^{\mu} \cdot e_{i_1}\ot \cdots \ot e_{i_r}\\
&=Y_r\cdots Y_{t+1}\frac{1}{h(\mu)}\phi(z_1,\ldots,z_t)|_{z_1=c_1}\cdots |_{z_{t}=c_{t}}\cdot e_{i_1}\ot \cdots \ot e_{i_r}.
\eal
\eeq
 Since
$$\phi(z_1,\ldots,z_t)=\mathop{\prod}\limits^{\rightarrow}_{1 \leq a<b \leq t}(1-\frac{P_{(a,b)}}{z_a-z_b}),$$
we can rewrite the fusion procedure as
\beq
\bal
\Ec_{\Tc^{-}}^{\mu} =&\frac{1}{h(\mu)} \phi(z_1,\ldots,z_{t-1})\mathop{\prod}\limits^{\rightarrow}_{1 \leq a \leq t-1}(1-\frac{P_{(a,t)}}{z_a-z_t})|_{z_1=c_1}\cdots |_{z_{t}=c_{t}}\\
=&\frac{1}{h(\mu)} \phi(z_1,\ldots,z_{t-1})\mathop{\prod}\limits^{\rightarrow}_{1 \leq a \leq s-1}(1-\frac{P_{(a,t)}}{z_a-z_t})|_{z_1=c_1}\cdots |_{z_{t}=c_{t}}\\
&\times \mathop{\prod}\limits^{\rightarrow}_{s \leq b \leq t-1}(1-\frac{P_{(b,t)}}{c_b-c_t}),
\eal
\eeq
where $\phi(z_1,\ldots,z_{t-1})$ is the front part of $\phi(z_1,\ldots,z_{t})$, with subscripts ranging from $1$ to $t-1$. Note that
\ben
(1-\frac{P_{(s,t)}}{c_s-c_t})=1+P_{(s,t)},
\een
and
\ben
\bal
&\mathop{\prod}\limits^{\rightarrow}_{s \leq b \leq t-1}(1-\frac{P_{(b,t)}}{c_b-c_t}) \cdot e_{i_1}\ot \cdots \ot e_{i_r}\\
&=const\cdot  (1+P_{(s,t)})\cdot e_{i_1}\ot \cdots \ot e_{i_r}\\
&=0.
\eal
\een
Hence, $\Ec_{\Tc}^{\lambda}\cdot e_{i_1}\ot \cdots \ot e_{i_r}=0$ if $\theta_{\bm\mu}(\Tc) \notin \mathrm{SSYT}(\lambda)$.

Finally, we consider $\theta_{\bm \mu}(\Tc) \in \mathrm{SSYT}(\lambda)$. We have known from identity \eqref{schur-weyl-iden2} that for $r\leq m+n$  and $\bm\mu=(1^{r})$,
\beq
\sqrt{h_{\lambda}} \mathcal{E}_{\Tc}^{\lambda}\cdot e_1\ot \cdots \ot e_r=\zeta_{\Tc}\ot v_{\Tc} .
\eeq

It is sufficient to show  equation \eqref{schur-weyl-iden1} when $r \leq m+n$. Indeed for $r>m+n$, as $U(\mathfrak{gl}_{m|n})$ is a subalgebra of $U(\mathfrak{gl}_{m|r-m})$, then the actions of $B_1(t),\ldots,B_{m+n}(t)$ unchanged in restricting  $U(\mathfrak{gl}_{m|r-m})$ to $U(\mathfrak{gl}_{m|n})$.

 For $r\leq m+n$ and  $I=(i_1 \leq \ldots \leq i_r)$,   we denote $E_i=E_{i,i+1},F_i=E_{i+1,i}$ in $U(\gl_{m|n})$.
Note that
\beq
\bal
&\left(\overset{\rightarrow}{\prod}_{i_s>s}F_{i_s-1}\cdots F_s \right) \left(\overset{\leftarrow}{\prod}_{i_t<t}E_{i_t}\cdots E_{t-1}\right)\cdot e_1\ot \cdots \ot e_r\\
&=c\cdot e_{i_1}\ot \cdots \ot e_{i_r},
\eal
\eeq
where $c$ is some nonzero constant.

Moreover assume that $t_0$ is the minimal such that $i_{t_0}<t_0$, then $i_{t_0-1}\geq t_0-1$. Since
$$
\begin{matrix}
\cdots & t_0-1 & < & t_0 & \cdots &  \\
 & \rotatebox{90}{$\geq$} & & \rotatebox{90}{$<$} &  & \\
\cdots  &i_{t_0-1} &\leq& i_{t_0}& \cdots &
\end{matrix}$$
so $i_{t_0-1}= i_{t_0}=t_0-1$.
According to coresponding \eqref{pattern-tableau-corr} and Gelfand-Tsetlin type formula \cite{SJ}, $E_{t_0-1}\zeta_{\Tc}=0$ if $\overline{t_0-1}=0$ and $t_0-1,t_0$ are in the same column of $\Tc$ or $\overline{t_0-1}=1$ and $t_0-1,t_0$ are in the same row of $\Tc$. Otherwise,
\beq
\bal
E_{t_0-1}\zeta_{\Tc}=c'\zeta_{\theta_{\bm\mu_0}(\Tc)},
\eal
\eeq
where $c'$ is some nonzero constant, $\bm \mu_0=(1^{t_0-2},2,0,1^{r-t_0},0^{m+n-r})$ and $\theta_{\bm\mu_0}(\Tc)$ is the unique semistandard supertableau of shape $\lambda$ and weight $\bm \mu_0$ by replacing $t_0$ with $t_0-1$ in $\Tc$. Then
\ben
\bal
&\sqrt{h(\lambda)} E_{t_0-1}\cdot \mathcal{E}_{\Tc}^{\lambda} e_1\ot \cdots \ot e_r\\
&=\sqrt{h(\lambda)}\mathcal{E}_{\Tc}^{\lambda} e_1\ot \cdots \ot e_{t_0-1}\ot e_{t_0-1} \ot e_{t_0+1}\ot \cdots \ot e_r.
\eal
\een
On the other hand,
\ben
\bal
&E_{t_0-1}\cdot \zeta_{\Tc}\ot v_{\Tc}=const\cdot \zeta_{\theta_{\bm \mu_0}(\Tc)}\ot v_{\Tc},
\eal
\een
here these constants are nonzero.
Therefore,
\ben
\bal
&\sqrt{h(\lambda)} \mathcal{E}_{\Tc}^{\lambda} e_1\ot \cdots \ot e_{t_0-1}\ot e_{t_0-1} \ot e_{t_0+1}\ot \cdots \ot e_r=const\cdot \zeta_{\theta_{\bm \mu_0}(\Tc)}\ot v_{\Tc}.
\eal
\een
Generally, if $i_{t}<t$, consider the order of actions $E_{i_t}\cdots E_{t-1}$ and these actions are not trivial, we have that the unique Gelfand-Tsetlin type vector $\zeta_{\Lambda}$ and $\Lambda$ is obtained from the semistandard supertableau in previous step by replacing  $t$ with $i_t$.

As the two actions of $\overset{\rightarrow}{\prod}_{i_s>s}F_{i_s-1}\cdots F_s $ and  $\overset{\leftarrow}{\prod}_{i_t<t}E_{i_t}\cdots E_{t-1}$ are supercommutative, we can show the similar results when $i_s>s$. Hence
\beq
\bal
&\left(\overset{\rightarrow}{\prod}_{i_s>s}F_{i_s-1}\cdots F_s \right) \left(\overset{\leftarrow}{\prod}_{i_t<t}E_{i_t}\cdots E_{t-1}\right)  \mathcal{E}_{\Tc}^{\lambda} e_1\ot \cdots \ot e_r\\
&=const\cdot \mathcal{E}_{\Tc}^{\lambda} e_{i_1}\ot \cdots \ot e_{i_r}.
\eal
\eeq
\beq
\bal
&\left(\overset{\rightarrow}{\prod}_{i_s>s}F_{i_s-1}\cdots F_s \right) \left(\overset{\leftarrow}{\prod}_{i_t<t}E_{i_t}\cdots E_{t-1}\right)  \mathcal{E}_{\Tc}^{\lambda}\cdot\zeta_{\Tc}\ot v_{\Tc}\\
&=const \cdot \zeta_{\theta_{\bm\mu}(\Tc)}\ot v_{\Tc}.
\eal
\eeq
Both constants are nonzero. Hence
\beq
\bal
\sqrt{\frac{h(\lambda)}{\alpha(I)}}\mathcal{E}_{\Tc}^{\lambda}\cdot e_{i_1}\ot \cdots \ot e_{i_r}=
c\cdot \zeta_{\theta_{\bm\mu}(\Tc)}\ot v_{\Tc}.
\eal
\eeq
Explicitly, we consider the  bilinear form between them.
If $\Tc$ is the unique pre-image of $\theta_{\bm\mu}(\Tc)$, ( i.e. if $i_s=i_t$ with $\bar i_s=0$ in $\theta_{\bm\mu}(\Tc)$, then $s,t$ are in the same row of $\Tc$ or with $\bar i_s=1$ in $\theta_{\bm\mu}(\Tc)$, then $s,t$ are in the same column of $\Tc$), then
\beq
\bal
&\langle  \mathcal{E}_{\Tc}\cdot e_{i_1}\ot \cdots \ot e_{i_r}, \mathcal{E}_{\Tc'}\cdot e_{i_1}\ot \cdots \ot e_{i_r}\rangle\\
&=\langle  \mathcal{E}_{\Tc'}\mathcal{E}_{\Tc}\cdot e_{i_1}\ot \cdots \ot e_{i_r},e_{i_1}\ot \cdots \ot e_{i_r}\rangle\\
&=\delta_{\Tc\Tc'}\langle  \mathcal{E}_{\Tc}\cdot e_{i_1}\ot \cdots \ot e_{i_r}, e_{i_1}\ot \cdots \ot e_{i_r} \rangle \\
&=\delta_{\Tc\Tc'}\frac{1}{h(\lambda)}
\langle  \sum_{\si \in \Sym_r}\langle \si^{-1}v_{\Tc},v_{\Tc}\rangle P_{\si}\cdot e_{i_1}\ot \cdots \ot e_{i_r}, e_{i_1}\ot \cdots \ot e_{i_r} \rangle \\
&=\delta_{\Tc\Tc'}\frac{1}{h(\lambda)}
\langle  \sum_{\si \in \Sym_I}\langle \si^{-1}v_{\Tc},v_{\Tc}\rangle P_{\si}\cdot e_{i_1}\ot \cdots \ot e_{i_r}, e_{i_1}\ot \cdots \ot e_{i_r} \rangle.
\eal
\eeq
Hence,
\beq
\bal
\langle  \mathcal{E}_{\Tc}\cdot e_{i_1}\ot \cdots \ot e_{i_r}, \mathcal{E}_{\Tc'}\cdot e_{i_1}\ot \cdots \ot e_{i_r}\rangle=\delta_{\Tc\Tc'}\frac{\alpha(I)}{h(\lambda)}.
\eal
\eeq
As these basis vectors in decomposition \eqref{Schur-Weyl-decom2} are normalized orthogonal,
\beq
\sqrt{\frac{h(\lambda)}{\alpha(I)}}\mathcal{E}_{\Tc}\cdot e_{i_1}\ot \cdots \ot e_{i_r}= \zeta_{\theta_{\bm\mu}(\Tc)}\ot v_{\Tc}.
\eeq

Moreover if there exists $\theta_{\bm\mu}(\Tc_1)=\cdots=\theta_{\bm\mu}(\Tc_s)$ ($s>1$), where $\Tc_1,\ldots,\Tc_s$ are standard Young tableaux with same shape $\lambda$, then
\begin{align*}
&\langle  \sqrt{\frac{h(\lambda)}{\alpha(I)}}\sum_{k=1}^s\mathcal{E}_{\Tc_k}\cdot e_{i_1}\ot \cdots \ot e_{i_r}, \sqrt{\frac{h(\lambda)}{\alpha(I)}}\sum_{k=1}^s\mathcal{E}_{\Tc_k}\cdot e_{i_1}\ot \cdots \ot e_{i_r}\rangle\\
&=\frac{h(\lambda)}{\alpha(I)}\langle  \sum_{k=1}^s\mathcal{E}_{\Tc_k}\cdot e_{i_1}\ot \cdots \ot e_{i_r},e_{i_1}\ot \cdots \ot e_{i_r}\rangle\\
&=\frac{1}{\alpha(I)}\langle \sum_{k=1}^s\sum_{\si \in \Sym_r}\langle \si^{-1} v_{\Tc_k},v_{\Tc_k}\rangle P_{\si}\cdot e_{i_1}\ot \cdots \ot e_{i_r}, e_{i_1}\ot \cdots \ot e_{i_r} \rangle \\
&=\frac{1}{\alpha(I)}\sum_{\si \in \Sym_I}\sum_{k=1}^s\langle \si^{-1} v_{\Tc_k},v_{\Tc_k}\rangle \langle  P_{\si}\cdot e_{i_1}\ot \cdots \ot e_{i_r}, e_{i_1}\ot \cdots \ot e_{i_r} \rangle.
\end{align*}
Since the subspace $V=\text{span}_{\mathbb{C}}\{v_{\Tc_k}, 1 \leq k \leq s\}$ is a finite-dimensional module of $\Sym_{\bm\mu}=\Sym_{\bm\mu_1}\times \cdots \times \Sym_{\bm\mu_r}$ and it is isomorphic to the tensor product $V_1\ot \cdots \ot V_r$ of skew representation of $\Sym_{\bm\mu_i}$, $ 1 \leq i \leq r$. As $\theta_{\bm\mu}(\Tc_k)\neq 0$, the skew partition is disconnected and then it is a permutation module. Thus it contains the trivial representation $1_{\Sym_{\mu}}$  with multiplicity one. By the definition of the inner product of character,
\beq
\bal
&\frac{1}{\alpha(I)}\sum_{\si \in \Sym_I}\sum_{k=1}^s\langle \si^{-1} v_{\Tc_k},v_{\Tc_k}\rangle \langle  P_{\si}\cdot e_{i_1}\ot \cdots \ot e_{i_r}, e_{i_1}\ot \cdots \ot e_{i_r} \rangle\\
&=\langle \chi^{V_1\ot \cdots \ot V_r}, 1_{\Sym_{\bm\mu}}  \rangle\\
&=1.
\eal
\eeq
Hence for $1\leq k \leq s$
\beq
\sqrt{\frac{h(\lambda)}{\alpha(I)}}\mathcal{E}_{\Tc_k}\cdot e_{i_1}\ot \cdots \ot e_{i_r}=a_k \zeta_{\theta_{\bm\mu}(\Tc_1)}\ot v_{\Tc_k},
\eeq
where $\sum_{k=1}^s ||a_k||^2=1$.

\end{proof}

\begin{proof}[\textbf{Proof of theorem \ref{super-Kostant-thm}}]
By the definition of super-immanants,
the left-hand side of \eqref{super-Kostant-iden} equals to
\beq
\bal
\frac{\Imm_{\chi^{\lambda}}(X_I)}{\alpha(I)}
&=\frac{(-1)^{\bar I}}{\alpha(I)}\langle i_{1},\ldots ,i_{r}\mid \chi^{\lambda}X_1\cdots X_{r} \mid i_{1},\ldots,i_{r}\rangle\\
&=\frac{(-1)^{\bar I}}{\alpha(I)}(\chi^{\lambda}X_1\cdots X_{r})_{i_{1},\ldots,i_{r}}^{i_{1},\ldots,i_{r}}\\
&=\frac{(-1)^{\bar I}}{\alpha(I)}\sum_{\tau\in \Sym_r} \chi^{\lambda}(\tau) (P_{\tau})_{i_{\tau(1)},\ldots,i_{\tau(r)}}^{i_{1},\ldots,i_{r}} x_{i_{\tau(1)},i_1} \cdots x_{i_{\tau(r)},i_r}.
\eal
\eeq

According to Lemma \ref{schur-weyl-corr},   the right-hand side of \eqref{super-Kostant-iden} equals to
\begin{align*}
&str(\mathcal P_{\bm\mu}\ot 1)\circ\Delta\circ \mathcal P_{\bm\mu}|_{L(\bm\lambda)}\\
&=(-1)^{\bar I}\langle  \sqrt{\frac{h(\lambda)}{\alpha(I)}}(\mathcal P_{\bm\mu}\ot 1) \circ \Delta \circ\sum_{\Tc}\mathcal{E}_{\Tc}\cdot e_{i_1}\ot \cdots \ot e_{i_r}, \\
& \quad \sqrt{\frac{h(\lambda)}{\alpha(I)}}\sum_{\Tc}\mathcal{E}_{\Tc}\cdot e_{i_1}\ot \cdots \ot e_{i_r}\rangle\\
&=\frac{(-1)^{\bar I}h(\lambda)}{\alpha(I)}\langle  (\mathcal P_{\bm\mu}\ot 1) \circ \Delta \circ\sum_{\Tc}\mathcal{E}_{\Tc}\cdot e_{i_1}\ot \cdots \ot e_{i_r}, e_{i_1}\ot \cdots \ot e_{i_r}\rangle.
\end{align*}

By the relation of characters and primitive idempotents of the symmetric group, one has that
\beq
\bal
&\frac{(-1)^{\bar I}h(\lambda)}{\alpha(I)}  (\mathcal P_{\bm\mu}\ot 1) \circ  \Delta \circ\sum_{\Tc}\mathcal{E}_{\Tc}\cdot e_{i_1}\ot \cdots \ot e_{i_r}\\
&=\frac{(-1)^{\bar I}}{\alpha(I)} (\mathcal P_{\bm\mu}\ot 1)\circ \Delta \circ\sum_{\si\in \Sym_r} \chi^{\lambda}(\si) P_{\si} \cdot e_{i_1}\ot \cdots \ot e_{i_r}\\
&=\frac{(-1)^{\bar I}}{\alpha(I)} \sum_{\si\in \Sym_r} \chi^{\lambda}(\si) P_{\si}\circ (\mathcal P_{\bm\mu}\ot 1)\sum_{(j_1,\ldots,j_r)} (-1)^{\sum\limits_{a<b}\bar j_a(\bar i_b+\bar j_b)} x_{j_1,i_1}\cdots x_{j_r,i_r}\\
&\quad \ot e_{j_1}\ot \cdots \ot e_{j_r}\\
&=\frac{(-1)^{\bar I}}{\alpha(I)} \sum_{\si,\tau\in \Sym_r} (-1)^{\sum\limits_{a<b}\bar i_{\tau(a)}(\bar i_b+\bar i_{\tau(b)})} \chi^{\lambda}(\si)  x_{i_{\tau(1)},i_1}\cdots x_{i_{\tau(r)},i_r}\\
&\quad \ot P_{\si}\cdot e_{i_{\tau(1)}}\ot \cdots \ot e_{i_{\tau(r)}}.
\eal
\eeq
So taking the  bilinear form with $e_{i_1}\ot \cdots \ot e_{i_r}$, these terms in the above equation are zero unless $\si=\tau$. Then
\beq
\bal
&str(\mathcal P_{\bm\mu}\ot 1)\circ \Delta\circ \mathcal P_{\bm\mu}|_{L(\bm\lambda)}\\
&=\frac{(-1)^{\bar I}}{\alpha(I)} \sum_{\tau\in \Sym_r} (-1)^{\overline{\tau^{-1}(I)}_{\tau} +\sum\limits_{a<b}\bar i_{\tau(a)}(\bar i_b+\bar i_{\tau(b)})} \chi^{\lambda}(\tau)  x_{i_{\tau(1)},i_1}\cdots x_{i_{\tau(r)},i_r}.
\eal
\eeq
Since
\ben
\bal
& P_{\tau}\cdot e_{i_{\tau(1)}}\ot \cdots \ot e_{i_{\tau(r)}}\\
&=(-1)^{\overline{\tau^{-1}(I)}_{\tau}}e_{i_1}\ot \cdots \ot e_{i_r}\\
&=(-1)^{\sum\limits_{a<b}\bar i_{\tau(a)}(\bar i_b+\bar i_{\tau(b)})}(P_{\tau})_{i_{\tau(1)},\ldots,i_{\tau(r)}}^{i_{1},\ldots,i_{r}}
e_{i_1}\ot \cdots \ot e_{i_r},
\eal
\een
we obtain that
\beq
\bal
&str(\mathcal P_{\bm\mu}\ot 1)\circ\Delta\circ \mathcal P_{\bm\mu}|_{L([\Lambda^{\lambda}])}\\
&=\frac{(-1)^{\bar I}}{\alpha(I)}\sum_{\tau\in \Sym_r} \chi^{\lambda}(\tau) (P_{\tau})_{i_{\tau(1)},\ldots,i_{\tau(r)}}^{i_{1},\ldots,i_{r}} x_{i_{\tau(1)},i_1} \cdots x_{i_{\tau(r)},i_r}\\
&=\frac{\Imm_{\chi^{\lambda}}(X_I)}{\alpha(I)}.
\eal
\eeq
\end{proof}

\section{Super Littlewood correspondences}
\subsection{Super Littlewood correspondences I and II}
We establish the relations between super-immanants and  Schur supersymmetric polynomials, which can be viewed as the super analogs of the Littlewood correspondences \cite{Li}.

We first recall some definitions of supersymmetric polynomials.

Let $\mathcal{X}_m = (x_1, \ldots, x_m)$, $\mathcal{Y}_n = (y_1, \ldots, y_n)$ be two sets of indeterminates, the symmetric group $\Sym_m\times \Sym_n$ acts on the polynomial ring $\mathbb C[\mathcal{X}_m, \mathcal{Y}_n] = \mathbb C[x_1, \ldots, x_m, y_1, \ldots, y_n]$. For $f \in \mathbb C[\mathcal{X}_m, \mathcal{Y}_n]$ and $t \in \CC$, we write $f(x_1 = t, y_1 = -t)$ for the polynomial obtained by substituting $x_1 = -y_1 = t$ in $f$.

An $\Sym_m\times \Sym_n$-invariant polynomial $f$ is \textit{supersymmetric} if $f(x_1 = t, y_1 = -t)$ is a polynomial which is independent of $t$. The set of all supersymmetric polynomials is called the  algebra of supersymmetric polynomials in $\mathcal{X}_m, \mathcal{Y}_n$. We will denote it by $\textbf{Sym}^{(m|n)}$.

The rth power sum  supersymmetric polynomial is defined as
\beq
p_{m,n}^{(r)}(\mathcal{X}_m, \mathcal{Y}_n)=(x_1^r+\ldots+x_m^r)+(-1)^{r-1}(y_1^r+\ldots+y_n^r).
\eeq
The Schur supersymmetric polynomials can be defined by different ways, for example see  \cite{Pra,PT,Rem,Macdonald,Mus}.
First we define polynomials $\mathbb{S}_k=\mathbb{S}_k(\mathcal{X}_m, \mathcal{Y}_n)$ by means of the generating function
\beq
S_{m,n}(t)=\prod_{i=1}^m(1-x_it)^{-1}\prod_{j=1}^n(1+y_jt)=\sum_{k \geq 0}\mathbb S_k(\mathcal{X}_m, \mathcal{Y}_n)t^k.
\eeq
Note that if $n=0$, then $\mathbb S_k(\mathcal{X}_m, \mathcal{Y}_n)$ are the homogeneous symmetric polynomials $h_k(\mathcal{X}_m)$, while if $m=0$, then $\mathbb S_k(\mathcal{X}_m, \mathcal{Y}_n)$ are the elementary symmetric polynomials $e_k(\mathcal{Y}_n)$.

Then for any partition $\lambda$ of length $l$, we define the \textit{Schur supersymmetric polynomial} $\mathbb S_{\lambda}= \mathbb S_{\lambda}(\mathcal{X}_m, \mathcal{Y}_n)$ to be the determinant of Jacobi-Trudi matrix $(\mathbb S_{\lambda_i-i+j})_{1\leq i,j\leq l}$
\beq
\mathbb S_{\lambda}=\det(\mathbb S_{\lambda_i-i+j}).
\eeq

We have the super analogs of the Littlewood correspondences I and II \cite{Li}[p. 118, p. 120].
\begin{theorem}\label{LittlewoodI}
   Corresponding to any relation between Schur supersymmetric polynomials of total order $m+n$, we may replace each Schur supersymmetric polynomial by the corresponding super-immanant of complementary principal minors of generator matrix of $A(\Mat_{m|n})$ provided that every product is summed for all sequences of pairwise disjoint subsets of $[m+n]$.
\end{theorem}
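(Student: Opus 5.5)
The plan is to realise both sides of any relation as images of a single linear map on class functions, and to check that this map is multiplicative in the sense prescribed by the statement. Let $N=m+n$ and let $R_k$ be the space of class functions on $\Sym_k$. Define
\[
\Phi_k:R_k\to A(\Mat_{m|n}),\qquad \Phi_k(f)=\sum_{S\subseteq[N],\,|S|=k}\Imm_{f}(X_S),
\]
where $X_S$ is the principal submatrix on the increasing index set $S$; $\Imm_f$ is linear in $f$ by \eqref{imm-def}. On the other side, the Frobenius characteristic $\mathrm{ch}(\chi^\lambda)=s_\lambda$ followed by the specialisation $p_r\mapsto p^{(r)}_{m,n}$ sends $s_\lambda$ to $\mathbb S_\lambda(\mathcal X_m,\mathcal Y_n)$. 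This is the standard fact that the Jacobi--Trudi definition with $S_{m,n}(t)$ is the image of $s_\lambda$ under this specialisation, since $\log S_{m,n}(t)=\sum_r p^{(r)}_{m,n}t^r/r$. A relation among Schur supersymmetric polynomials of total order $N$ will therefore be treated as an identity $\sum c\prod_j\mathbb S_{\lambda^{(j)}}=\sum c'\prod_j\mathbb S_{\mu^{(j)}}$, where each product has $\sum_j|\lambda^{(j)}|=N$.

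The key step is a multiplicativity lemma. For class functions $f_j\in R_{k_j}$ with $\sum_j k_j=N$, I will show
\[
\sum_{(S_1,\dots,S_p)}\prod_j \Imm_{f_j}(X_{S_j})=\Imm_{\mathrm{Ind}(f_1\times\cdots\times f_p)}(X_{[N]}),
\]
where the sum runs over ordered sequences of pairwise disjoint $S_j$ with $|S_j|=k_j$; these necessarily partition $[N]$. Since $\mathrm{ch}$ sends induction products to ordinary products, this turns any relation among products $\prod_j\mathbb S_{\lambda^{(j)}}$ into the same relation among the summed products of super-immanants. For the complementary-minor case the right-hand side is $\Imm_{\chi^\nu}(X)$ with $\chi^\nu$ given by the Littlewood--Richardson rule, exactly as in the classical proof in \cite{Li}.

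To prove the lemma I expand both sides via \eqref{imm-def}. For a single principal minor with distinct indices, $\Imm_f(X_S)=\sum_{\si\in\Sym_{k}}f(\si)\,\epsilon(\si,S)\,x_{s_{\si(1)}s_1}\cdots x_{s_{\si(k)}s_k}$, where $\epsilon$ is the sign coming from $(P_\si)^{I}_{\si(I)}$ as in the proof of Theorem \ref{super-Kostant-thm}. For the right-hand side, the induction formula gives $\mathrm{Ind}(f)(\tau)=\frac{1}{\prod k_j!}\sum_{g:\,g\tau g^{-1}\in \Sym_{k_1}\times\cdots\times\Sym_{k_p}}f(g\tau g^{-1})$. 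Conjugating by $g$ amounts to choosing the ordered set partition $(S_1,\dots,S_p)$ of the $\tau$-invariant blocks, and the factor $1/\prod k_j!$ cancels the choices of ordering inside each $S_j$. I then have to check that the super-signs match, i.e. that
\[
\epsilon(\tau,[N])\,x_{\tau(1)1}\cdots x_{\tau(N)N}
\]
equals the product over $j$ of $\epsilon(\tau|_{S_j},S_j)$ times the reordered monomial. The tool here is $P_{\si\si'}=P_\si P_{\si'}$, together with the fact that the super-sign convention in the matrix coefficients $A^{I}_{J}$ is precisely the one that makes $X_1\cdots X_N$ compatible with regrouping tensor factors. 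Concretely, $X_1\cdots X_N$ restricted to the factors in $S$ is $\prod_{a\in S}X_a$, and the operator $P_\tau$ factors as a tensor product of the restrictions $P_{\tau|_{S_j}}$ conjugated by the shuffle permutation $w$ that brings $S_1,\dots,S_p$ into consecutive positions.

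I expect this sign bookkeeping, namely the interaction of the shuffle $w$ with the parities of the odd entries $x_{ij}$ when monomials in different blocks are commuted past each other, to be the main obstacle. I would handle it by conjugating the whole expression $\langle I|P_\tau X_1\cdots X_N|I\rangle$ by $P_w$. Using the super Schur--Weyl equivariance ($P_w$ commutes with $X_1\cdots X_N$ up to relabelling of positions) and the fact that the diagonal matrix element $\langle I|\cdot|I\rangle$ of a $P_w$-conjugate only picks up $(-1)^{\bar I_w}$ twice, the sign cancels, and the block tensor factorisation then gives the product of block super-immanants directly. Finally, for partitions $\lambda\notin H(m,n;N)$, both $\mathbb S_\lambda$ and, by Proposition \ref{imm-char}, the super-immanant vanish, so the correspondence is consistent and no further kernel issue arises.
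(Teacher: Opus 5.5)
Your proposal is correct and is essentially the paper's argument. The paper proves the multiset version, Theorem \ref{LittlewoodII}, of which this statement is the distinct-index case with every $\alpha(I)=1$; it shows that the super-immanant of an induced character equals the sum, over complementary index sets, of products of super-immanants, and then invokes the Littlewood--Richardson expansion, which is your multiplicativity lemma combined with the Frobenius characteristic.

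The only difference is bookkeeping: the paper writes the induced character as $\sum_{\si}\si\,\mathcal{E}^{\mu}\mathcal{E}^{\nu}\si^{-1}$ and uses Proposition \ref{imm-char} with a coset decomposition, which absorbs the super-signs that you handle by conjugating with the shuffle $P_w$. Also, your closing remark about $\lambda\notin H(m,n;N)$ is vacuous here, since every partition of size at most $m+n$ lies in $H(m,n)$; what both arguments actually use (implicitly) to pull a relation back to symmetric functions is the linear independence of the $\mathbb S_\lambda$ with $\lambda\in H(m,n)$.
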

In general,
\begin{theorem}\label{LittlewoodII}
   Corresponding to any relation between Schur supersymmetric polynomials, we may replace each Schur supersymmetric polynomial by the corresponding (normalized) super-immanant of a principal minor of generator matrix of $A(\Mat_{m|n})$ provided that we sum for all principal minors of the appropriate order with non-decreasing ordered multisets of $[m+n]$.
\end{theorem}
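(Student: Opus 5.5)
The plan is to realise the Littlewood correspondence as an algebra homomorphism, in the spirit of the classical argument. Let $\Lambda^{(m|n)}=\bigoplus_r \Lambda^{(m|n)}_r$ be the ring of supersymmetric functions, with linear basis the Schur supersymmetric polynomials $\mathbb S_\lambda$. By the Berele--Regev/Sergeev theory, $\mathbb S_\lambda$ is the character of $U^\lambda$, so $\mathbb S_\lambda=0$ exactly when $\lambda\notin H(m,n)$. For each $r$ we define a linear map $\Phi_r$ from the degree-$r$ part to $A(\Mat_{m|n})$ by
\[
\Phi_r(\mathbb S_\lambda)=\sum_{I}\frac{\Imm_{\chi^\lambda}(X_I)}{\alpha(I)} ,
\]
where the sum runs over all non-decreasing multisets $I$ of $[m+n]$ of size $r$. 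This map is well defined because both sides vanish for $\lambda\notin H(m,n;r)$: the right-hand side does by Proposition~\ref{imm-char}. Theorem~\ref{LittlewoodII} is then the assertion that $\Phi=\bigoplus_r\Phi_r$ is a ring homomorphism. Theorem~\ref{LittlewoodI} is the special case of multiplicity-free $I$ with total order $m+n$, where $\alpha(I)=1$ and only subsets occur.

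The key identification uses Theorem~\ref{super-Kostant-thm}. Summing it over all weights $\bm\mu$ shows that $\Phi_r(\mathbb S_\lambda)$ equals $\sum_{\bm\mu} str(\mathcal P_{\bm\mu}\ot 1)\circ\Delta\circ\mathcal P_{\bm\mu}|_{L(\bm\lambda)}$. More directly, writing $\chi^\lambda=h(\lambda)\sum_\Tc\Ec^\lambda_\Tc$ and using Lemma~\ref{schur-weyl-corr}, one sees that $\Phi_r(\mathbb S_\lambda)$ is the super-character of the comodule $U^\lambda$. In other words, it is the diagonal-block supertrace of the coaction of $A(\Mat_{m|n})$ on $U^\lambda\subset(\CC^{m|n})^{\ot r}$, namely
\[
\Phi_r(\mathbb S_\lambda)=\frac{1}{r!}\sum_{J}(-1)^{\bar J}\langle J\mid \chi^\lambda X_1\cdots X_r\mid J\rangle ,
\]
where the sum is over all sequences $J$. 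Equivalently, $\Phi_r(f)=str_{1,\ldots,r}(\hat f\,X_1\cdots X_r)/r!$, where $\hat f\in \CC[\Sym_r]$ is the class function attached to $f$ by the Frobenius characteristic map. I would check this equivalence first by grouping the sequences $J$ into $\Sym_r$-orbits of a sorted multiset $I$. Each orbit has $r!/\alpha(I)$ elements, and conjugation invariance of $\chi^\lambda$ together with Lemma~\ref{adjoint-op} gives equal contributions, as in Proposition~\ref{imm-char}.

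Multiplicativity then follows from the tensor product. The coaction is multiplicative, so $X_1\cdots X_{r+s}=(X_1\cdots X_r)(X_{r+1}\cdots X_{r+s})$. The supertrace over $r+s$ factors of an operator of the form $a\ot b$ therefore factors as $str(a)\,str(b)$, with signs controlled by the super tensor rules and by the evenness of $X$. Take $\hat f\ot\hat g\in\CC[\Sym_r\times\Sym_s]$ and induce it to $\Sym_{r+s}$. Induction corresponds to the product of Frobenius characteristics, and averaging over cosets leaves the normalized supertrace unchanged. This gives
\[
\Phi_{r+s}(fg)=\Phi_r(f)\,\Phi_s(g).
\]
The Littlewood--Richardson expansion of $\mathbb S_\mu\mathbb S_\nu$ holds for supersymmetric Schur functions because they are images of ordinary Schur functions under the specialization $\Lambda\to\Lambda^{(m|n)}$. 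This image statement is the compatibility we need.

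It remains to interpret the products. The product $\Phi_r(\mathbb S_\mu)\Phi_s(\mathbb S_\nu)$, after expanding into principal super-immanants, is a sum over pairs of multisets, which is the phrase ``summed over all sequences'' in the statements. For Theorem~\ref{LittlewoodI} we restrict to the multilinear component, the coefficient of weight $(1^{m+n})$. There only complementary disjoint subsets survive, and all $\alpha=1$. Any relation among Schur supersymmetric polynomials lifts to a relation in $\Lambda$ modulo $\ker(\Lambda\to\Lambda^{(m|n)})$, which is spanned by the $s_\lambda$ with $\lambda\notin H(m,n)$. Since $\Phi$ kills these by Proposition~\ref{imm-char}, the relation is preserved.

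The main obstacle will be the sign bookkeeping in the factorization $str_{1..r+s}=str_{1..r}\,str_{r+1..r+s}$ over the supercommutative ring $R$. It also requires verifying that the immanant sign $(-1)^{\sum\bar i_k\bar j_k}$ and the factor $(-1)^{\bar I}$ match the supertrace convention. I would first settle this on the case $r=s=1$, where $\Phi(p_1)=str X$ and the identity reads $str(X)^2=\Phi(p_1^2)$, and then proceed by induction using the transposition operators $P_{(a,b)}$.
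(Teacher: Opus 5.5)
Your proposal is correct, but it is organized differently from the paper's proof.

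\textbf{The paper's route (local).} The paper fixes one non-decreasing multiset $I$ and writes $\chi^{\mathrm{Ind}(V^\mu\ot V^\nu)}=\sum_{\si}\si\,\Ec^\mu\Ec^\nu\si^{-1}$. It then uses Proposition~\ref{imm-char} to rewrite $\Imm_{\chi^{\mathrm{Ind}}}(X_I)$ as a sum of diagonal matrix coefficients over rearrangements of $I$. It splits $\Sym_k$ into cosets of $\Sym_{|\mu|}\times\Sym_{|\nu|}$ and factors each coefficient into a product of two principal super-immanants. This gives
\[
\sum_{(I_1,I_2)}\frac{\Imm_{\chi^{\mu}}(X_{I_1})\Imm_{\chi^{\nu}}(X_{I_2})}{\alpha(I_1)\alpha(I_2)}=\sum_{\lambda}c_{\mu\nu}^{\lambda}\frac{\Imm_{\chi^{\lambda}}(X_I)}{\alpha(I)}
\]
for each single $I$, with $(I_1,I_2)$ running over splittings of $I$. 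Theorem~\ref{LittlewoodI} is then literally the case $I=(1,\ldots,m+n)$.

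\textbf{Your route (global).} The normalized full supertrace $f\mapsto str_{1,\ldots,r}(\hat f X_1\cdots X_r)/r!$ absorbs the orbit counting. The factors $\alpha(I)/(\alpha(I_1)\alpha(I_2))$ never have to be computed. Multiplicativity reduces to two facts: $str(a\ot b)=str(a)\,str(b)$ for even operators, and the averaging formula for induced class functions.

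What this buys you:
\begin{itemize}
\item a ring homomorphism $\Lambda^{(m|n)}\to A(\Mat_{m|n})$ at once, essentially the content of Theorems~\ref{t:Litt3} and \ref{iso-sym} without any eigenvalues;
\item cleaner sign bookkeeping than the paper's per-coefficient factorization, which it does not spell out;
\item a transparent treatment of arbitrary polynomial relations and of the terms with $\lambda\notin H(m,n)$, via the kernel of $\Lambda\to\Lambda^{(m|n)}$.
\end{itemize}
Your detour through Theorem~\ref{super-Kostant-thm} and Lemma~\ref{schur-weyl-corr} is not needed; the direct orbit-counting argument suffices.

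\textbf{What you should make explicit.} As stated, your argument proves the identity summed over all $I$, which is coarser than the fixed-$I$ identity the paper establishes. That fixed-$I$ form is also what yields the coefficients $\alpha(I)/\prod_j\alpha(I_j)$ in the Littlewood--Merris--Watkins corollary. To recover it, state in general the step you only use for $I=(1,\ldots,m+n)$:
\begin{itemize}
\item $A(\Mat_{m|n})$ is bigraded by row content and column content, and the supercommutation relations are homogeneous;
\item each $\Imm_{\chi}(X_J)$ is bihomogeneous of degree $(J,J)$;
\item so taking the $(I,I)$-component of $\Phi(fg)=\Phi(f)\Phi(g)$ gives exactly the paper's identity for every multiset $I$.
\end{itemize}
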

We only prove Theorem \ref{LittlewoodII}, Theorem \ref{LittlewoodI} can be viewed as a special case of  Theorem \ref{LittlewoodII}.
\begin{proof}[Proof of Theorem \ref{LittlewoodII} ]
  We only need to prove the case that the product of two Schur supersymmetric polynomials, suppose that $|\mu|+|\nu|=k$  and $$\mathbb{S}_{\mu}\mathbb{S}_{\nu}=\sum_{\lambda \vdash k}c_{\mu\nu}^{\lambda}\mathbb{S}_{\lambda}.$$
Then we consider the induced representation of tensor product of irreducible representations $V^{\mu},V^{\nu}$, from the relation of characters and primitive idempotents,  we have that
\begin{align*}
\chi^{\mathrm{Ind}(V^{\mu}\ot V^{\nu})}=\sum_{\si \in \mathfrak{S}_{k}}\si\mathcal{E}^{\mu}\mathcal{E}^{\nu}\si^{-1}=\sum_{\lambda}c_{\mu\nu}^{\lambda}\chi^{\lambda}.
\end{align*}
Hence it follows from proposition \ref{imm-char} that  for any non-decreasing ordered multisets $I=(1\leq i_1\leq \ldots \leq i_{k} \leq m+n)$
\begin{equation}
\begin{aligned}
&\Imm_{\chi^{\mathrm{Ind}(V^{\mu}\ot V^{\nu})}}(X_I) \\
   &=(-1)^{\bar I}\langle i_1,\ldots ,i_k
\mid \chi^{\mathrm{Ind} (V^{\mu}\ot V^{\nu})} X_1\cdots X_{k} \mid i_1,\ldots ,i_k\rangle\\
   &=(-1)^{\bar I}\sum_{\sigma\in\mathfrak{S}_k}\langle i_{\sigma_1},\dots ,i_{\sigma_k}\mid \mathcal{E}^{\mu} \mathcal{E}^{\nu}X_1\cdots X_k \mid i_{\sigma_1},\dots,i_{\sigma_k}\rangle.
\end{aligned}
\end{equation}

For each sequence $(I_1,I_2)$ of  non-decreasing ordered multisets of $[m+n]$ satisfying $|I_1|=|\mu|, \ |I_2|=|\nu|$ and the disjoint union of $I_1$ and $I_2$ is $I$,  there exists the minimal length coset representative elements $\si \in \mathcal M(\mathfrak{S}_k/\Sym_{|\mu|}\times \Sym_{|\nu|})$ such that $\si^{-1}(I)= (I_1,I_2)$.
Hence by proposition  \ref{imm-char}  we have that
\begin{equation*}
\begin{aligned}
&\Imm_{\chi^{\mathrm{Ind}(V^{\mu}\ot V^{\nu})}}(X_I) \\
&=(-1)^{\bar I}\sum_{ \tau \in \Sym_{|\mu|}\times \Sym_{|\nu|}, \atop \si \in \mathcal M(\mathfrak{S}_k/\Sym_{|\mu|}\times \Sym_{|\nu|}) }\langle i_{\tau_{\si_1}},\dots ,i_{\tau_{\si_k}}\mid \mathcal{E}^{\mu} \mathcal{E}^{\nu} X_1 \cdots X_k \mid i_{\tau_{\si_1}},\dots ,i_{\tau_{\si_k}}\rangle\\
&=\sum_{(I_1,I_2)}\frac{\alpha(I)}{\alpha(I_1)\alpha(I_2)}{\Imm}_{\chi^{\mu}}(X_{I_1}) {\Imm}_{\chi^{\nu}}(X_{I_2}),
\end{aligned}
\end{equation*}
where the sums are over all sequences $(I_1,I_2)$ of  non-decreasing ordered multisets of $[m+n]$  and the disjoint union of $I_j, \ j=1,2$ is $I$.

On the other hand,
\begin{equation}
\begin{aligned}
\Imm_{\chi^{\mathrm{Ind}(V^{\mu}\ot V^{\nu})}}(X_I)
   &=(-1)^{\bar I}\langle i_1,\ldots ,i_k
\mid \chi^{\mathrm{Ind}(V^{\mu}\ot V^{\nu})} X_1\cdots X_{k} \mid i_1,\ldots ,i_k \rangle\\
   &=\sum_{\lambda}c_{\mu\nu}^{\lambda}(-1)^{\bar I}\langle i_1,\ldots ,i_k  \mid \chi^{\lambda} X_1\cdots X_{k} \mid i_1,\ldots ,i_k \rangle\\
   &=\sum_{\lambda}c_{\mu\nu}^{\lambda}\Imm_{\chi^{\lambda}}(X_I).
\end{aligned}
\end{equation}
Hence,
\[
\sum_{(I_1,I_2)}\frac{\Imm_{\chi^{\mu}}(X_{I_1}) \Imm_{\chi^{\nu}}(X_{I_2})}{\alpha(I_1)\alpha(I_2)}  =\sum_{\lambda}c_{\mu\nu}^{\lambda}\frac{ \Imm_{\chi^{\lambda}}(X_I)}{\alpha(I)}.
\]
\end{proof}

In particular, we have general Littlewood-Merris-Watkins identities \cite{KS,MW} for supermatrices.
Let  $\psi^{\mu}$ (resp. $\phi^{\mu}$) be the induced characters of the sign character (resp. trivial character) of the parabolic subgroup $\Sym_{\mu}$ of $\Sym_k$.
They can be decomposeed into  the irreducible $\Sym_k$-characters:
\begin{align*}
 \psi^{\mu}=\sum_{\lambda}K_{\lambda^{T},\mu}\chi^{\lambda}, \ \ \
\phi^{\mu}=\sum_{\lambda}K_{\lambda,\mu}\chi^{\lambda},
\end{align*}
where $K_{\lambda,\mu}$ are the Kostka numbers.
\begin{corollary}
   Let $I=(1\leq i_1\leq \ldots \leq i_k \leq m+n)$ be a multiset of $[m+n]$. Fix a partition $\lambda=(\lambda_1,\dots,\lambda_l)$ of $k$. Then
\begin{equation}\label{gene-qLMW}
\begin{aligned}
   &\Imm_{\psi^{\lambda}}(X_I)= \sum_{(I_1,\dots,I_l)}\frac{ \alpha(I)}{ \alpha(I_1)\cdots \alpha(I_l)}\Imm_{\chi^{(1^{\lambda_1})}}(X_{I_1})\cdots \Imm_{\chi^{(1^{\lambda_l})}}(X_{I_l}),\\
   &\Imm_{\phi^{\lambda}}(X_I)=\sum_{(I_1,\dots,I_l)}\frac{ \alpha(I)}{ \alpha(I_1)\cdots \alpha(I_l)}\Imm_{\chi^{(\lambda_1)}}(X_{I_1})\cdots \Imm_{\chi^{(\lambda_l)}}(X_{I_l}),
\end{aligned}
\end{equation}
where the sums are taken over all sequences $(I_1,\dots,I_l)$ of non-decreasing ordered multisets of $[m+n]$ satisfying $|I_j|=\lambda_j$ and the disjoint union of $I_j, 1 \leq j \leq l$ is $I$.
\end{corollary}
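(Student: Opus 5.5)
The plan is to repeat the proof of Theorem \ref{LittlewoodII}, now with $l$ factors instead of two. The starting point is that $\psi^{\lambda}$ and $\phi^{\lambda}$ are induced characters from the parabolic subgroup $\Sym_{\lambda}=\Sym_{\lambda_1}\times\cdots\times\Sym_{\lambda_l}$:
\[
\psi^{\lambda}=\mathrm{Ind}_{\Sym_\lambda}^{\Sym_k}\big(\chi^{(1^{\lambda_1})}\ot\cdots\ot\chi^{(1^{\lambda_l})}\big),\qquad
\phi^{\lambda}=\mathrm{Ind}_{\Sym_\lambda}^{\Sym_k}\big(\chi^{(\lambda_1)}\ot\cdots\ot\chi^{(\lambda_l)}\big).
\]
Here the sign and trivial characters of $\Sym_{\lambda_j}$ are the one-dimensional irreducibles $\chi^{(1^{\lambda_j})}$ and $\chi^{(\lambda_j)}$. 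Their primitive idempotents are the full antisymmetrizer $\mathcal A_j$ and the full symmetrizer $\mathcal S_j$ on the $j$-th block of positions.

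First I rewrite the induced character through idempotents, as in the proof of Theorem \ref{LittlewoodII}:
\[
\psi^{\lambda}=\sum_{\si\in\Sym_k}\si\,\mathcal A_1\cdots\mathcal A_l\,\si^{-1},
\]
and similarly for $\phi^\lambda$ with $\mathcal S_j$. The factors commute because they act on disjoint position blocks. Substituting this into the definition \eqref{imm-def} and repeating the manipulation of Proposition \ref{imm-char} gives
\[
\Imm_{\psi^\lambda}(X_I)=(-1)^{\bar I}\sum_{\si}\langle i_{\si(1)},\ldots,i_{\si(k)}\mid \mathcal A_1\cdots\mathcal A_l X_1\cdots X_k\mid i_{\si(1)},\ldots,i_{\si(k)}\rangle .
\]

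Next I factor $\si=\gamma\tau$, with $\gamma$ a minimal length coset representative of $\Sym_k/\Sym_\lambda$ and $\tau\in\Sym_\lambda$. Each $\gamma$ determines a sequence $(I_1,\dots,I_l)$ of non-decreasing multisets with disjoint union $I$ and $|I_j|=\lambda_j$. Different $\gamma$ can give the same sequence, and the number that do is $\alpha(I)/(\alpha(I_1)\cdots\alpha(I_l))$. The operator $\mathcal A_1\cdots\mathcal A_l X_1\cdots X_k$ is a tensor product over the blocks, so the diagonal matrix coefficient factors into a product of block coefficients. The sum over $\tau=(\tau_1,\dots,\tau_l)$ then splits, and each factor becomes $\Imm_{\chi^{(1^{\lambda_j})}}(X_{I_j})$ by Proposition \ref{imm-char} applied to $\Sym_{\lambda_j}$. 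This yields the first identity in \eqref{gene-qLMW}. The second follows word for word with $\mathcal S_j$ in place of $\mathcal A_j$.

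The main obstacle is the sign bookkeeping in this factorization. Splitting the matrix coefficient of a super tensor product into block coefficients brings in Koszul signs from moving odd $x_{ij}$ past one another, and the global sign $(-1)^{\bar I}$ must be matched with the product of the signs $(-1)^{\bar I_j}$ and with the signs $(-1)^{\bar I_\gamma}$ of $P_\gamma$. I would handle this with the convention $A^{i_1\ldots i_r}_{j_1\ldots j_r}$, which already carries the sign $\gamma(I,J)$. In that convention $(A\ot B)^{IJ}_{IJ}=A^I_I B^J_J$ for even operators and diagonal indices, which is exactly the supercommutativity used implicitly in the proof of Theorem \ref{LittlewoodII}. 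The remaining point is the multiplicity count: the overcounting by $\gamma$ within each multiset class is precisely what produces the factor $\alpha(I)/\prod_j\alpha(I_j)$.
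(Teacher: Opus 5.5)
Your proposal is correct and takes the paper's route. The paper gives no separate proof of this corollary. It presents it as a direct consequence of the argument for Theorem \ref{LittlewoodII}, run with $l$ factors: write $\psi^{\lambda}$ (resp. $\phi^{\lambda}$) as $\sum_{\si}\si\,\mathcal A_1\cdots\mathcal A_l\,\si^{-1}$, reduce via the manipulation of Proposition \ref{imm-char}, split $\si$ into a minimal coset representative times an element of $\Sym_\lambda$, and count $\alpha(I)/\prod_j\alpha(I_j)$ representatives per sequence $(I_1,\dots,I_l)$. That is exactly your plan.

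The sign bookkeeping you flag is milder than you fear. The $P_\si$-signs already cancel in that manipulation, diagonal coefficients of block tensor products of even operators factor without sign, and $\bar I=\sum_j\bar I_j$.
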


\subsection{MacMahon Master Theorem}
Define $\alpha_0=\beta_0=1$, for $k<0$ $\alpha_k=\beta_k=0$ and for $k>0$,
\begin{equation}\label{genefcn1}
\begin{aligned}
  &\alpha_k=str_{1,\ldots,k}\Ec^{(1^{k})}X_1\cdots X_k=\sum_{|I|=k}\frac{\Imm_{\chi^{(1^{k})}}(X_I)}{\alpha(I)},\\ &\beta_k=str_{1,\ldots,k}\Ec^{(k)}X_1\cdots X_k=\sum_{|I|=k}\frac{\Imm_{\chi^{(k)}}(X_I)}{\alpha(I)},
\end{aligned}
\end{equation}
where the sums are taken over all non-decreasing multisets $I$ with $k$ elements. Note that $\{\alpha_k \mid k \in \mathbb{Z}^{+}\}$ in $A(\Mat_{m|n})$  pairwise commute.
The  commutative subalgebra $\mathfrak{B}_{m|n}$ of $A(\Mat_{m|n})$ is generated by $\{\alpha_k,\mid  k \in \mathbb{Z}^{+}\}$.

We set
\begin{align}
\lambda(t)=\sum_{k=0}^{\infty}t^k \alpha_k,\qquad
\si(t)=\sum_{k=0}^{\infty}t^k \beta_k.
\end{align}
The following can be viewed as the super version of the MacMahon Master Theorem in \cite{MRa}.
\begin{theorem}\label{Mac}
$\lambda(-t)\times \si(t) =1.$
\end{theorem}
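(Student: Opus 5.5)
We would prove $\lambda(-t)\sigma(t)=1$ coefficientwise, i.e. for every $k\geq 1$
\begin{equation*}
\sum_{j=0}^{k}(-1)^{j}\alpha_j\beta_{k-j}=0 ,
\end{equation*}
by expressing each product $\alpha_j\beta_{k-j}$ as a single supertrace over $k$ tensor factors. Write $A^{(j)}=\Ec^{(1^j)}$ and $H^{(k-j)}=\Ec^{(k-j)}$, where the latter acts on the factors $j+1,\ldots,k$. Since $X_1\cdots X_j$ and $X_{j+1}\cdots X_k$ involve disjoint tensor factors and supertraces are multiplicative on disjoint factors, we get
\begin{equation*}
\alpha_j\beta_{k-j}=str_{1,\ldots,k}\, A^{(j)}H^{(k-j)}X_1\cdots X_k .
\end{equation*}
A sign check is needed here because the entries $x_{ab}$ are not all even. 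The even operators $X_1\cdots X_j$ and the idempotents supercommute correctly, so the product of partial supertraces is the full supertrace. This is the super analogue of the step in \cite{MRa}, and it is also what the corollary (the Littlewood--Merris--Watkins type factorization of induced characters) encodes. Indeed, by Theorem \ref{LittlewoodII} and the second expression in \eqref{genefcn1}, summing over $|I|=k$ gives $\alpha_j\beta_{k-j}=\sum_{|I|=k}\Imm_{\chi_j}(X_I)/\alpha(I)$, where $\chi_j$ is the character induced from $\mathrm{sgn}\times 1$ on $\Sym_j\times\Sym_{k-j}$.

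The problem is therefore reduced to a statement in the group algebra, or at the level of characters. By Pieri's rule,
\begin{equation*}
\chi_j=\mathrm{Ind}_{\Sym_j\times\Sym_{k-j}}^{\Sym_k}(\mathrm{sgn}\otimes 1)=\chi^{(k-j+1,1^{j-1})}+\chi^{(k-j,1^{j})},
\end{equation*}
with the convention that the first term is absent for $j=0$ and the second for $j=k$. The alternating sum $\sum_j(-1)^j\chi_j$ then telescopes to $0$. Since super-immanants are linear in the character, as is clear from \eqref{imm-def}, it follows that $\sum_j(-1)^j\alpha_j\beta_{k-j}=0$ after summing over $I$. This is the super version of the classical identity $\sum(-1)^je_jh_{k-j}=0$, and it is compatible with the Littlewood correspondence: under Theorem \ref{LittlewoodII}, $\alpha_j$ and $\beta_j$ correspond to $\mathbb S_{(1^j)}$ and $\mathbb S_{(j)}$, and the relation $S_{m,n}(t)\cdot S_{m,n}(-t)^{-1}$-type generating identity holds there.

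Alternatively, we could use the direct operator identity of \cite{MRa}. One writes $\Ec^{(1^j)}\Ec^{(k-j)}$ (on disjoint factors) in terms of symmetrizations over $\Sym_k$ of hook idempotents. By cyclicity of the supertrace and the invariance of $X_1\cdots X_k$ under conjugation by $P_\si$ (since $P_\si X_1\cdots X_k P_\si^{-1}=X_{\si(1)}\cdots X_{\si(k)}$ and these commute up to the correct signs), the symmetrization can be inserted freely inside $str_{1,\ldots,k}$.

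We expect the main obstacle to be justifying the two manipulations inside the supertrace: the cyclicity $str(P_\si Y P_\si^{-1})=str(Y)$, and the replacement of $A^{(j)}H^{(k-j)}$ by its $\Sym_k$-average $\frac{j!(k-j)!}{k!}\sum_\si P_\si A^{(j)}H^{(k-j)}P_\si^{-1}$. Both must be handled with the Grassmann signs of the odd entries $x_{ab}$. We would first verify $P_{(a,a+1)}X_aX_{a+1}=X_{a+1}X_aP_{(a,a+1)}$ in the super setting, which follows from the definition \eqref{P} and supercommutativity of $R$, and then use that $P_\si$ has entries in $\CC$, so the supertrace is cyclic with respect to it. Once this is established, the remaining steps are the Pieri decomposition and the telescoping sum.
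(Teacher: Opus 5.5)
Your argument is correct, but it does not follow the paper, because the paper gives no proof of its own. It presents the statement as the supercommutative specialization of the Molev--Ragoucy MacMahon Master Theorem for right quantum superalgebras \cite{MRa}.

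You instead prove it directly, in three steps:
\begin{itemize}
\item You write $\alpha_j\beta_{k-j}=str_{1,\ldots,k}\,\Ec^{(1^j)}\Ec^{(k-j)}X_1\cdots X_k$.
\item You average over $\Sym_k$ to obtain $\sum_{|I|=k}\Imm_{\chi_j}(X_I)/\alpha(I)$ with $\chi_j=\mathrm{Ind}(\mathrm{sgn}\otimes 1)$. This is legitimate because the $X_a$ commute exactly, so $P_\si X_1\cdots X_k P_\si^{-1}=X_1\cdots X_k$, and $str$ is invariant under conjugation by the even scalar operators $P_\si$. It is the same computation that underlies Theorems~\ref{LittlewoodII} and~\ref{quantum-JT}.
\item You apply Pieri's rule and telescope.
\end{itemize}

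Your route is self-contained and uses only tools already in the paper. It exhibits the theorem as the image of $\sum_j(-1)^je_jh_{k-j}=0$ under the character-to-immanant correspondence. It even gives a refinement: the alternating sum vanishes separately for each multiset $I$.

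What the citation buys is generality. For right quantum (super-Manin) matrices, neither the factorization of $\alpha_j\beta_{k-j}$ as a single supertrace nor the $\Sym_k$-invariance of $X_1\cdots X_k$ is available, and that is where the actual work in \cite{MRa} lies.

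There are two harmless slips.
\begin{itemize}
\item If $\si$ runs over all of $\Sym_k$, the averaging weight is $1/k!$. The weight $j!(k-j)!/k!$ is correct only for a sum over coset representatives of $\Sym_k/(\Sym_j\times\Sym_{k-j})$, using that $\Ec^{(1^j)}\Ec^{(k-j)}$ is central in $\CC[\Sym_j\times\Sym_{k-j}]$.
\item The matching identity on the symmetric-function side is $S_{m,n}(t)^{-1}S_{m,n}(t)=1$, because $\sum_k(-1)^k\mathbb S_{(1^k)}t^k=S_{m,n}(t)^{-1}$.
\end{itemize}
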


Let $Y, Z\in \Mat_{m|n}(R)$. Denote $Y*Z=str_1 P Y_1Z_2$.
Let $X^{[k]}$ be the $k$th power of $X$ under the multiplication $*$, i.e.
\begin{align}\label{e:power-m}
X^{[0]}=1,\   X^{[1]}=X, \ X^{[k]}=X^{[k-1]}*X,\ k>1.
\end{align}
We denote
\begin{align}
\psi(t)=\sum_{k=0}^{\infty}t^k\gamma_{k+1},
\end{align}
where $\gamma_k=strX^{[k]}$.

The following Newton identities follow from the super
MacMahon Master Theorem, see \cite{MRa}.
\begin{theorem}[Newton's identities]\label{Newton-iden}
\begin{align}
&\partial_t \lambda(-t)=-\lambda(-t) \psi(t),\\
&\partial_t \si(t)= \psi(t)\si(t).
 \end{align}
\end{theorem}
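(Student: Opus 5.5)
The plan is to derive both identities from Theorem \ref{Mac} and a single auxiliary identity tying $\psi(t)$ to the symmetrizer generating function $\si(t)$. By Theorem \ref{Mac} we have $\lambda(-t)\si(t)=1$, and all $\alpha_k$ commute; I also expect the $\beta_k$ to commute with the $\alpha_k$, since $\si(t)=\lambda(-t)^{-1}$ as formal power series. So it suffices to prove the second identity $\partial_t\si(t)=\psi(t)\si(t)$. Differentiating $\lambda(-t)\si(t)=1$ then gives $(\partial_t\lambda(-t))\si(t)+\lambda(-t)\partial_t\si(t)=0$. Multiplying on the right by $\lambda(-t)$ and using commutativity yields $\partial_t\lambda(-t)=-\lambda(-t)\psi(t)\si(t)\lambda(-t)=-\lambda(-t)\psi(t)$.

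Comparing coefficients of $t^{k-1}$, the second identity reads
\[
k\beta_k=\sum_{j=1}^{k}\gamma_j\beta_{k-j}.
\]
To prove it I will use the standard recursive decomposition of the symmetrizer. Write $\Ec^{(k)}=\frac{1}{k!}\sum_{\si\in\Sym_k}P_\si$ and split $\Sym_k$ according to the cycle containing $k$. This gives
\[
k\,\Ec^{(k)}=\Ec^{(k-1)}\bigl(1+P_{(1,k)}+\cdots+P_{(k-1,k)}\bigr)\cdot\frac{k}{k}\;\text{(suitably normalized)},
\]
or better, the relation $k!\,\Ec^{(k)}=(k-1)!\,\Ec^{(k-1)}\bigl(1+\sum_{a<k}P_{(a,k)}\bigr)$. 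I then substitute this into $k\beta_k=k\,str_{1,\ldots,k}\Ec^{(k)}X_1\cdots X_k$ and take the partial supertrace $str_k$ first.

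The term with $1$ gives $\beta_{k-1}\,str X=\gamma_1\beta_{k-1}$, once we check that $\beta_{k-1}$ and $str X$ commute, since both are even elements of the supercommutative algebra generated by $x_{ij}$. A term $P_{(a,k)}$ contracts $X_a$ and $X_k$. Using $str_k P_{(a,k)}X_aX_k=(X*X)_a$, which is exactly the definition of $X^{[2]}$ through $Y*Z=str_1PY_1Z_2$, this reduces the tensor length by one. Symmetrizing over $a$ and iterating produces the terms $X^{[j]}$ sitting in one slot. Concretely, I would prove by induction the identity
\[
str_{1,\ldots,k}\,\Ec^{(k)}X_1\cdots X_k=\frac1k\sum_{j=1}^k str_{1,\ldots,k-j}\,\Ec^{(k-j)}X_1\cdots X_{k-j}\cdot str X^{[j]},
\]
which is the graded analogue of the argument in \cite{MRa}. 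Summing over cycle lengths $j$ of the cycle through $k$ (there are $(k-1)!/(k-j)!$ such cycles for fixed remaining letters) gives the coefficient $1$ in front of each $\gamma_j\beta_{k-j}$.

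The main obstacle is keeping the signs correct in the super setting. I need to check that contracting a cycle $(a_1,\ldots,a_j,k)$ under $str$ yields $str X^{[j]}$ with sign $+$, rather than $(-1)^{j-1}$ or parity-dependent signs. This relies on the graded permutation operators \eqref{P} and on the supertrace absorbing the $(-1)^{\bar j}$ factors. I will first verify the base case $j=2$ directly from \eqref{P}, namely $str_{1,2}P_{(1,2)}X_1X_2=str(X*X)$. Then I will induct on $j$ using $P_{(a,b)}P_{(b,c)}$ cycle factorizations and cyclicity of the partial supertrace for even operators. Finally, I will need that $\gamma_j$ commutes with the $\beta_i$, so that the order $\psi(t)\si(t)$ is immaterial. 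If that commutativity fails, I will keep the order exactly as produced by the contraction, with the supertrace over the first slots on the left, which matches the stated order.
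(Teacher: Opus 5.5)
Your proposal is correct and is essentially the argument the paper delegates to \cite{MRa}. The paper only remarks that the identities follow from the super MacMahon theorem, and your recursion $k\beta_k=\sum_{j=1}^k\gamma_j\beta_{k-j}$ is exactly the extra input that brings in $\psi(t)$; it comes from the coset decomposition of the symmetrizer together with $str_{1,\ldots,k}(P_\si X_1\cdots X_k)=\prod_{c}str\,X^{|c|}$ over the cycles $c$ of $\si$, which does hold with sign $+$, and differentiating $\lambda(-t)\si(t)=1$ then gives the other identity. The commutativity you hedge on is automatic, since every $\alpha_k,\beta_k,\gamma_k$ is an even element of the supercommutative algebra $A(\Mat_{m|n})$; in any case $\si(t)\lambda(-t)=1$ holds for invertible formal power series without it.
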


\subsection{Super Goulden-Jackson identities and super Littlewood correspondence III }

Fix a partition $\lambda \vdash r$. On the  commutative algebra $\mathfrak{B}_{m|n}$ generated by $\alpha_i$ or $\beta_i$ \eqref{genefcn1}, we denote
\[
A=(\alpha_{\lambda_i^{T}-i+j})_{\lambda_1\times \lambda_1},\ \ \ \ \  B=(\beta_{\lambda_i-i+j})_{\lambda_1^T\times \lambda_1^T}.
\]
Then we have the following two special elements in $\mathfrak{B}_{m|n}$:
\begin{align*}
  &\det(A)=\sum_{\mu}K_{\lambda^{T},\mu}^{-1}\alpha_{\mu_1}\cdots \alpha_{\mu_{\lambda_1}}, \\
  &\det(B)=\sum_{\mu}K_{\lambda,\mu}^{-1}\beta_{\mu_1}\cdots \beta_{\mu_{\lambda_1}}.
\end{align*}

The following is the generalization in  super case  of the  Goulden-Jackson identities that appear as Theorem 2.1 in \cite{GJ} and
Theorem 3.2 in \cite{KS}.  It can be viewed as the generalization of the Jacobi-Trudi identity of Schur supersymmetric polynomials.
\begin{theorem}\label{quantum-JT} We have that
\begin{equation}\label{GJ-identity}
\det(A)=\det(B)=str_{1,\ldots,r}(\mathcal{E}_{\Tc}^{\lambda}X_1\cdots X_r) =\sum_{I}\frac{ \Imm_{\chi^{\lambda}}(X_I)}{\alpha(I)},
\end{equation}
where the sum is over all non-decreasing ordered multisets $I$ of $[m+n]$ satisfying $|I|=r$.
\end{theorem}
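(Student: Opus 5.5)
I will prove \eqref{GJ-identity} by splitting it into three equalities: the last two, then $\det(B)=str_{1,\ldots,r}(\mathcal{E}_{\Tc}^{\lambda}X_1\cdots X_r)$, and finally $\det(A)=\det(B)$.

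\emph{The last equality.} Expand the supertrace $str_{1,\ldots,r}$ over all ordered $r$-tuples of indices, and group the tuples by their underlying non-decreasing multiset $I$; each class has $r!/\alpha(I)$ tuples of the form $(i_{\si(1)},\ldots,i_{\si(r)})$. Averaging over the tableaux, using that the diagonal supertrace terms are conjugation invariant (cyclicity of $str$ together with $P_\si$ commuting with $X_1\cdots X_r$), we may replace $\mathcal{E}_{\Tc}^{\lambda}$ by $\frac{1}{r!}\sum_\si P_\si\mathcal{E}^\lambda_{\Tc}P_{\si^{-1}}=\chi^\lambda/r!$. Proposition \ref{imm-char} then gives exactly $\Imm_{\chi^{\lambda}}(X_I)/\alpha(I)$ for each class; the sign $(-1)^{\bar I}$ matches the supertrace sign. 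This is the same manipulation as in the proof of Theorem \ref{super-Kostant-thm}. In particular, $\Phi(\chi):=str_{1,\ldots,r}(\chi X_1\cdots X_r)/r!$ is a linear functional on class functions with $\Phi(\chi^\lambda)=\sum_I \Imm_{\chi^\lambda}(X_I)/\alpha(I)$.

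\emph{The middle equality.} Write $\chi^\lambda$ via the Jacobi--Trudi expansion in induced trivial characters, $\chi^\lambda=\det(\phi^{(\lambda_i-i+j)})=\sum_\mu K^{-1}_{\lambda,\mu}\phi^\mu$ (here $\phi^{\mu}$ is read as induction from the corresponding product of symmetric groups). It then suffices to show $\Phi(\phi^\mu)=\beta_{\mu_1}\beta_{\mu_2}\cdots$. This is the super Littlewood--Merris--Watkins corollary summed over all $I$: each $I$ splits into the ordered tuple $(I_1,\ldots,I_l)$, the weights $\alpha(I)/\alpha(I_1)\cdots\alpha(I_l)$ cancel against $1/\alpha(I)$, and \eqref{genefcn1} identifies $\sum_{I_j}\Imm_{\chi^{(\mu_j)}}(X_{I_j})/\alpha(I_j)=\beta_{\mu_j}$. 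For this factorization to be legitimate the $\beta_k$ must commute, so that the product is independent of order. That commutativity follows from Theorem \ref{Mac}, because $\si(t)=\lambda(-t)^{-1}$ and the $\alpha_k$ commute. The same argument with sign characters and $\psi^\mu=\sum K_{\lambda^T,\mu}\chi^\lambda$ gives $\Phi(\chi^\lambda)=\det(A)$ directly, via the dual Jacobi--Trudi expansion $\chi^\lambda=\sum_\mu K^{-1}_{\lambda^T,\mu}\psi^\mu$.

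\emph{The equality $\det(A)=\det(B)$.} This now follows from the two expansions above, since both equal $\Phi(\chi^\lambda)$. Independently, it is also the classical consequence of $\lambda(-t)\si(t)=1$ (Theorem \ref{Mac}) in the commutative algebra $\mathfrak{B}_{m|n}$: the involution $e_k\leftrightarrow h_k$ yields $\det(e_{\lambda^T_i-i+j})=\det(h_{\lambda_i-i+j})$.

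The main obstacle I expect is the sign bookkeeping in the first step. We must check that the supertrace sign $(-1)^{\bar I}$ and the reordering signs $(-1)^{\bar I_\si}$ from $P_\si$ combine so that every ordered tuple in a class contributes the same term, and that the LMW factorization respects the super signs when splitting $X_1\cdots X_r$ into blocks. I would handle both by reusing the identities for $P_{\si^{-1}}|i_1,\ldots,i_r\rangle$ already established in the proofs of Proposition \ref{imm-char} and Theorem \ref{super-Kostant-thm}.
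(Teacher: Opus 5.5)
Your proposal is correct and is essentially the paper's argument: expand the supertrace over ordered tuples, collapse each class onto its non-decreasing multiset $I$ using $\sum_{\si}P_\si\Ec^{\lambda}_{\Tc}P_{\si^{-1}}=\chi^\lambda$ and Proposition~\ref{imm-char}, and then expand $\chi^\lambda=\sum_\mu K^{-1}_{\lambda^T,\mu}\psi^\mu$ (resp.\ in the $\phi^\mu$) to match $\det(A)$ (resp.\ $\det(B)$). The differences are minor. You obtain $\sum_I \Imm_{\psi^\mu}(X_I)/\alpha(I)=\alpha_{\mu_1}\alpha_{\mu_2}\cdots$ by summing the super Littlewood--Merris--Watkins corollary over $I$, whereas the paper performs the same product-of-supertraces symmetrization inline, and the commutativity of the $\beta_k$ that you derive from Theorem~\ref{Mac} is in fact automatic, since $\alpha_k$ and $\beta_k$ are even elements of the supercommutative algebra $A(\Mat_{m|n})$.
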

\begin{proof}
  By definition,
  \begin{equation*}
  \begin{aligned}
     \det(A)
     &=\sum_{\mu\vdash r} K_{\lambda^{T},\mu}^{-1}str_{1,\ldots,r}
     \mathcal{E}^{(1^{\mu_1})}\cdots \mathcal{E}^{(1^{\mu_{\lambda_1}})}X_1\cdots X_r\\
     &=\sum_{\mu\vdash r}\sum_{J}(-1)^{\bar J}\langle j_1,\dots ,j_r \mid K_{\lambda^{T},\mu}^{-1}\mathcal{E}^{(1^{\mu_1})}\cdots \mathcal{E}^{(1^{\mu_{\lambda_1}})}X_1\cdots X_r\mid  j_1,\dots ,j_r  \rangle,
  \end{aligned}
  \end{equation*}
and
\ben
\bal
&str_{1,\ldots,r}(\mathcal{E}_{\Tc}^{\lambda}X_1\cdots X_r)\\
&= \sum_{J}(-1)^{\bar J} \langle  j_1,\dots ,j_r  \mid \mathcal{E}_{\Tc}^{\lambda}X_1\cdots X_r \mid  j_1,\dots ,j_r  \rangle\\
&= \sum_{I}\frac{(-1)^{\bar I}}{\alpha(I)}\sum_{\si \in \Sym_r} \langle  i_{\si_1},\dots ,i_{\si_r} \mid \mathcal{E}_{\Tc}^{\lambda}X_1\cdots X_r \mid  i_{\si_1},\dots ,i_{\si_r} \rangle\\
&=\sum_{I}\frac{\Imm_{\chi^{\lambda}}(X_I)}{\alpha(I)},
\eal
\een
  where the sum in the first line is over all sequences $J$ with $r$ elements and the sum in the second line runs over all non-decreasing multisets $I$ of $[m+n]$ satisfying $|I|=r$.

  By the  Proposition  \ref{imm-char}, we have that
   \begin{equation*}
  \begin{aligned}
     \det(A)&=\sum_{\mu}\sum_{J}(-1)^{\bar J} \langle  j_1,\dots ,j_r  \mid K_{\lambda^{T},\mu}^{-1}\mathcal{E}^{(1^{\mu_1})}\cdots \mathcal{E}^{(1^{\mu_{\lambda_1}})}X_1\cdots X_r\mid  j_1,\dots ,j_r  \rangle\\
     &=\sum_{\mu}\sum_{J'}\frac{(-1)^{\bar J'}} {\alpha(J')}\langle  j'_1,\dots ,j'_r  \mid K_{\lambda^{T},\mu}^{-1} \sum_{\si\in \mathfrak{S}_r}P_{\si}\mathcal{E}^{(1^{\mu_1})}\cdots \\
     &\quad \cdots \mathcal{E}^{(1^{\mu_{\lambda_1}})}P_{\si^{-1}} X_1\cdots X_r\mid j'_1,\dots ,j'_r \rangle\\
     &=\sum_{\mu}\sum_{J'}\frac{(-1)^{\bar J'}}{\alpha(J')}\langle j'_1,\dots ,j'_r \mid K_{\lambda^{T},\mu}^{-1} \psi^{\mu} X_1\cdots X_r\mid  j'_1,\dots ,j'_r \rangle\\
     &=\sum_{J'}\frac{(-1)^{\bar J'}}{\alpha(J')}\langle  j'_1,\dots ,j'_r \mid\chi^{\lambda} X_1\cdots X_r\mid  j'_1,\dots ,j'_r\rangle\\
     &=\sum_{J'}\frac{\Imm_{\chi^{\lambda}}(X_{J'})}{\alpha(J')},
  \end{aligned}
  \end{equation*}
  where the second sum of the second line runs over all non-decreasing multisets $J'=(j'_1\leq \dots \leq j'_r)$ of $[m+n]$, and the fourth line is because
   $\sum_{\mu}K_{\lambda^{T},\mu}^{-1}\psi^{\mu}=\chi^{\lambda}$. We can follow $\det(B)=str_{1,\ldots,r}\mathcal{E}^{\lambda} X_1\cdots X_r$ by the same method.
\end{proof}

Let the generator supermatrix  $X=(x_{ij})$ of $A(\Mat_{m|n})$ have the block form
\[
X = \begin{pmatrix}
A & B \\
C & D
\end{pmatrix},
\]
where submatrices $A,B,C,D$ are, respectively, $m \times m$,  $m \times n$,  $n \times m$,  $n \times n$ dimensional matrices. The submatrices $A$, $D$ and $B$, $C$, respectively, have even and odd parity.

 We consider the solutions of characteristic polynomials $a(t)=\det(tI-A)$ and $d(t)=\det(tI-D)$ over the algebraic closure of fraction field of domain generated by field of  $A_{ij},D_{kl},1 \leq i,j\leq m, 1 \leq k,l\leq n$, and denote the characteristic roots respectively  by $a_1,\ldots,a_m$ and $d_1,\ldots,d_n$. We have the  following lemma for diagonalization of supermatrix $X$  in \cite{KN88}.
\begin{lemma}\label{diag-lem}
Under the assumption above, there exists an invertible supermatrix $U$ such that
\beq\label{UMU-1}
U^{-1}XU= \diag(\omega_1,\ldots,\omega_m, \varpi_1,\ldots,\varpi_n),
\eeq
where $\omega_i=a_i+\mu_i$, $\varpi_j=d_j+\nu_j$ such that $\mu_i^2=\nu_j^2=0$.
\end{lemma}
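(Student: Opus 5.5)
We diagonalize $X$ in two stages: first block-diagonalize it by killing the odd blocks $B$ and $C$, then diagonalize the even diagonal blocks over the algebraic closure, and finally correct the eigenvalues by nilpotent terms.

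\textbf{Step 1: block diagonalization.} We look for
\[
U_1=\begin{pmatrix} 1 & P\\ Q & 1\end{pmatrix},
\]
with $P$ ($m\times n$) and $Q$ ($n\times m$) odd, such that $U_1^{-1}XU_1$ is block diagonal. The off-diagonal conditions are the Riccati-type equations
\[
B + AP - PD - PCP = 0,\qquad C + DQ - QA - QBQ=0.
\]
We solve them degree by degree in the Grassmann (odd) filtration. The linear part is the Sylvester operator $P\mapsto AP-PD$. Its eigenvalues are $a_i-d_j$, and these are invertible in the fraction field because $A$ and $D$ have algebraically independent generic entries, so $a_i\neq d_j$ generically. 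Hence the Sylvester operator is invertible after localization. Since $R$ is a free supercommutative algebra, the nilpotent products of odd elements make the iteration terminate, or at least converge in the odd-adic completion, and this yields $P$ and $Q$. The result is
\[
U_1^{-1}XU_1=\diag(A',D'),\qquad A'=A+BQ+\cdots,\quad D'=D+CP+\cdots,
\]
where $A'-A$ and $D'-D$ lie in the ideal generated by products of two odd elements.

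\textbf{Step 2: diagonalizing the blocks.} Over the algebraic closure $K$ of the fraction field, $A$ has distinct eigenvalues $a_i$ (a generic matrix has a nonzero discriminant), so $A=S\,\diag(a_i)\,S^{-1}$. Then $S^{-1}A'S=\diag(a_i)+N$ with $N$ nilpotent-valued, meaning every entry lies in the even nilpotent ideal. Because the $a_i$ are distinct, standard perturbation theory applies: we conjugate by $1+T$, solving $[\diag(a_i),T]=$ the off-diagonal part of $N$ order by order. This removes the off-diagonal entries and leaves diagonal entries $\omega_i=a_i+\mu_i$ with $\mu_i$ even and nilpotent. The same procedure for $D'$ gives $\varpi_j=d_j+\nu_j$. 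We take $U$ to be the product of the conjugations from Steps 1 and 2.

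\textbf{Main obstacle.} The delicate point is the claimed condition $\mu_i^2=\nu_j^2=0$. This is not automatic: in a general Grassmann algebra a nilpotent correction need not square to zero. I would first try to establish it by computing $\mu_i$ explicitly to leading order, $\mu_i=\sum_{j}(BQ)$-type terms of the form $b_{ik}c_{kl}/(a_i-d_l)$ transformed by $S$, where each such summand is a product of two odd generators. The hope is then that $\mu_i^2$ vanishes because of the special structure of the generic odd entries appearing in the coordinate algebra, for example by using $\mathrm{str}$ and Berezinian invariance, as in \cite{KN88}. If this fails in general, I would fall back on citing \cite{KN88} directly for the precise statement, since the lemma is quoted from there, and use only the existence of the diagonal form with nilpotent corrections, which is all the later Littlewood III argument (through the Berezinian and $\mathrm{str}$ of powers) actually needs.
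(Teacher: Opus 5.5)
Steps 1 and 2 of your proposal are sound, and they take a different route from the paper.

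The paper conjugates by $V=\diag(V_1,V_2)$ to make $A$ and $D$ diagonal. It then solves, one column at a time, for normalized even eigenvectors $z_i$ and odd eigenvectors $w_j$ of $V^{-1}XV$ by successive substitution, and takes $U=VQ$ with $Q=(z_1,\ldots,z_m,w_1,\ldots,w_n)$.

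You instead remove the odd blocks first, via Riccati equations linearized by the Sylvester operator $P\mapsto AP-PD$. You then do nondegenerate perturbation theory inside each even block.

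Both routes use the same genericity:
\begin{itemize}
\item $a_i\neq d_j$: for your Sylvester operator, and in the paper for solving $(a_1-d_j+\mu_1)y_j=C'_jx$;
\item distinctness of the $a_i$ and of the $d_j$: for your perturbation step, and in the paper for $x_i=-(a_i-a_1-\mu_1)^{-1}B'_iy$.
\end{itemize}
Your version makes evenness and invertibility of $U$ automatic. It also shows directly that $\omega_i-a_i$ and $\varpi_j-d_j$ lie in the ideal generated by products of two odd entries. The paper's version produces the columns of $U$ explicitly as normalized eigenvectors.

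The obstacle you flag is real, and you should abandon the hope that $\mu_i^2=0$. The condition is false as soon as $n\ge 2$ (for the $\mu_i$) or $m\ge 2$ (for the $\nu_j$), so neither your argument nor any citation can deliver it.

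Here is a counterexample. Take $m=1$, $n=2$, and conjugate so that $D=\diag(d_1,d_2)$. Call the odd entries $b_1,b_2$ (first row) and $c_1,c_2$ (first column); they remain linearly independent over $K$. Suppose $U^{-1}XU$ is diagonal with $(1,1)$ entry $\omega=a+\mu$, where $\mu$ is nilpotent. The first entry of the first column of $U$ is invertible because $U$ is, so normalize that column to $(1,y_1,y_2)^T$. It satisfies $a+b_1y_1+b_2y_2=\omega$ and $c_k+d_ky_k=\omega y_k$. Hence
\[
\mu=\frac{b_1c_1}{a-d_1+\mu}+\frac{b_2c_2}{a-d_2+\mu},\qquad
\mu^2=\frac{2\,b_1c_1b_2c_2}{(a-d_1)(a-d_2)}\neq 0 .
\]

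The paper's proof has the same defect. It sets $\mu_1=B'_1y$ and never checks the square, yet $(B'_1y)^2=2\sum_{k<l}b'_{1k}y_k\,b'_{1l}y_l$ is nonzero in general.

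What is true, and what your construction proves, is that $\mu_i$ and $\nu_j$ are nilpotent. That is all Proposition~\ref{alpha-sym} needs. It only uses $\Ber(tI-X)=\prod_i(t-\omega_i)/\prod_j(t-\varpi_j)$, which follows from the diagonal form and the multiplicativity of $\Ber$. So replace the square-zero condition by nilpotency and drop the fallback; your proof is then complete.
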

\begin{proof}
Firstly, there are invertible matrices $V_1$ and $V_2$ with even entries such that $V_1^{-1}AV_1=\diag(a_1,\ldots,a_m)$ and $V_2^{-1}DV_2=\diag(d_1,\ldots,d_n)$. Let $V=\begin{pmatrix}
V_1 & 0 \\
0 & V_2
\end{pmatrix}$ and $X'=V^{-1}XV=\begin{pmatrix}
A' & B' \\
C' & D'
\end{pmatrix}$. Then we claim that there exists  eigenvalues $\omega_i=a_i+\mu_i$ and $\varpi_j=d_j+\nu_j$ of $X'$ such that
\beq
\bal
X'z_i=\omega_i z_i,\ \ X'w_j=\varpi_j w_j,
\eal
\eeq
where $\mu_i^2=\nu_j^2=0$ and
$z_i=(x_1,\ldots,x_{i-1},1,x_{i+1},\ldots,x_m,y_1,\ldots,y_n)^T$ (resp. $w_j=(p_1,\ldots,p_{m},q_1,\ldots,q_{j-1},1,q_{j+1},\ldots,q_n)^T$)
is called even (resp. odd) eigenvector in \cite{KN88}.

We only consider one equation $X'z_1=(a_1+\mu_1)z_1$ and can get other eigenvalues and their corresponding eigenvectors by the same method. From the equation $X'z_1=(a_1+\mu_1)z_1$, we have the following system of equations:
\begin{equation}\label{eq-sys}
     \begin{cases}
         B'_1y=\mu_1, \\
         a_2x_2+B'_2y=(a_1+\mu_1)x_2,\\
         \cdots \cdots\\
         a_mx_m+B'_my=(a_1+\mu_1)x_m, \\
         C'_1x+d_1y_1=(a_1+\mu_1)y_1,\\
         \cdots \cdots\\
         C'_nx+d_ny_n=(a_1+\mu_1)y_n,
     \end{cases}
 \end{equation}
where let $B'=\begin{pmatrix}
B'_1 \\ \vdots \\ B'_m
\end{pmatrix}$, $C'=\begin{pmatrix}
C'_1 \\ \vdots \\ C'_n
\end{pmatrix}$, $x=\begin{pmatrix}
1 \\ x_2 \\ \vdots \\ x_m
\end{pmatrix}$, $y=\begin{pmatrix}
 y_1 \\ \vdots \\ y_n
\end{pmatrix}$.
Hence from the first $m$ equations we get
\beq
x_i=-(a_i-a_1-B'_1y)^{-1}B'_i y, \ 2 \leq i \leq m.
\eeq
And by substituting $x_i$  and $\mu_1$  in the $(m+1)$-th equation and continuing so on, we get $y_j$ is expressed as a polynomial in $y_{j+1},\ldots,y_n$ for $1 \leq j \leq n$ by using fact $y_i^2=0$. Hence the system \eqref{eq-sys} of equations is solved. Finally,  let $Q=(z_1,\ldots ,z_m,w_1,\ldots,w_n)$ and $U=VQ$, then  $$U^{-1}XU= \diag(\omega_1,\ldots,\omega_m, \varpi_1,\ldots,\varpi_n).$$
\end{proof}

\begin{proposition}\label{alpha-sym}
For $k>0$,
\beq
\alpha_k=\mathbb S_{(1^k)}(\omega_1,\ldots,\omega_m,-\varpi_1,\ldots,-\varpi_n).
\eeq
\end{proposition}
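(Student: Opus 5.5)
The plan is to express $\alpha_k$ as a conjugation-invariant function of $X$, use Lemma \ref{diag-lem} to replace $X$ by a diagonal supermatrix, and then evaluate the invariant on the diagonal explicitly.

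\textbf{Step 1: invariance under conjugation.} Write $\alpha_k=str_{1,\ldots,k}\,\Ec^{(1^k)}X_1\cdots X_k$ as in \eqref{genefcn1}. For an even invertible supermatrix $U$ we have
\[
(U^{-1}XU)_1\cdots(U^{-1}XU)_k=U_1^{-1}\cdots U_k^{-1}\,X_1\cdots X_k\,U_1\cdots U_k .
\]
The operator $U_1\cdots U_k$ supercommutes with every $P_\si$; this is the $\GL_{m|n}$ side of the super Schur--Weyl duality. Hence it commutes with $\Ec^{(1^k)}$. Cyclicity of the supertrace for even operators then gives $\alpha_k(X)=\alpha_k(U^{-1}XU)$.

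There is one technical point here. The entries of $U$ lie in an extension ring: the algebraic closure of the fraction field of the even entries, adjoined with the odd generators. So $\alpha_k$ must be regarded as a function of $X$ with entries in any supercommutative algebra. This is legitimate because $\alpha_k$ is a polynomial identity in the entries, and we may embed $A(\Mat_{m|n})$ into this extension.

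\textbf{Step 2: evaluation on a diagonal matrix.} By Lemma \ref{diag-lem}, we may assume $X=\diag(\omega_1,\ldots,\omega_m,\varpi_1,\ldots,\varpi_n)$, whose entries are all even. Then $X_1\cdots X_k$ is diagonal in the basis $|j_1,\ldots,j_k\rangle$ with eigenvalue $\prod_a x_{j_a}$, where $x_j=\omega_j$ or $\varpi_{j-m}$. This gives
\[
\alpha_k=\sum_J(-1)^{\bar J}\Big(\prod_a x_{j_a}\Big)\langle J|\Ec^{(1^k)}|J\rangle .
\]
Grouping the $J$ by content $\mu$, the contribution of $\mu$ equals $\prod_j x_j^{\mu_j}$ times the supertrace of the antisymmetrizer restricted to the weight space of weight $\mu$.

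Since $\Ec^{(1^k)}=\frac1{k!}\sum_\si sgn(\si)P_\si$, the image of $\Ec^{(1^k)}$ is the super exterior power $\Lambda^k(\CC^{m|n})$. This space is $\bigoplus_{p+q=k}\Lambda^p(\CC^m)\ot S^q(\CC^n)$: even vectors antisymmetrize, and odd vectors, picking up the extra sign, symmetrize. Its even/odd parity on the piece with $q$ odd factors is $(-1)^q$.

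The supertrace of an idempotent restricted to a weight space equals the superdimension of its image there. Therefore
\[
\alpha_k=\sum_{p+q=k}e_p(\omega)\,(-1)^q h_q(\varpi)=\sum_{p+q=k}e_p(\omega)\,h_q(-\varpi).
\]

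\textbf{Step 3: identification with $\mathbb S_{(1^k)}$.} By the Jacobi--Trudi definition with generating function $S_{m,n}(t)$, the dual (conjugate) identity gives
\[
\sum_k\mathbb S_{(1^k)}(\mathcal X_m,\mathcal Y_n)t^k=\frac{\prod_i(1+x_it)}{\prod_j(1-y_jt)}.
\]
This follows from $\det(\mathbb S_{1-i+j})$ together with the relation $S(t)E(-t)=1$, which is standard. Substituting $y_j=-\varpi_j$, the coefficient of $t^k$ is $\sum_{p+q=k}e_p(\omega)h_q(-\varpi)$, which matches Step 2.

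\textbf{Main obstacles.} I expect two points to need the most care. The first is Step 1: justifying the conjugation in an extension ring, and checking that the sign conventions in $P_\si$ really make $U^{\ot k}$ commute with $P_\si$. The second is the sign bookkeeping in Step 2, where I must verify that $\langle J|\Ec^{(1^k)}|J\rangle$, summed with the factor $(-1)^{\bar J}$ within a weight space, equals $(-1)^q$ times $1$ or $0$. I would check this directly: $\Ec^{(1^k)}$ vanishes unless the even indices are distinct, and within a fixed content its trace is $1$.
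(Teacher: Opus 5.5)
Your proof is correct, but it takes a different route from the paper. The paper never proves conjugation invariance of $\alpha_k$ or evaluates it on a diagonal supermatrix. Instead it applies multiplicativity of the Berezinian to the diagonalization of Lemma \ref{diag-lem}, obtaining $\Ber(tI-X)=\prod_i(t-\omega_i)/\prod_j(t-\varpi_j)=\sum_k(-1)^k\mathbb S_{(1^k)}(\omega,-\varpi)t^{m-n-k}$. It then compares coefficients with the expansion $\Ber(tI-X)=\sum_k(-1)^k\alpha_k t^{m-n-k}$, which it simply cites (Berezin, Khudaverdian--Voronov, Schwarz, Molev--Ragoucy).

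Your Steps 1--2 give a self-contained proof of what that citation supplies:
\begin{itemize}
\item invariance comes from $U_1\cdots U_k$ commuting with the $P_\si$, together with cyclicity of the supertrace for even operators; this is the same commutation the paper uses in the proof of Proposition \ref{imm-char};
\item the diagonal value comes from superdimensions of weight spaces of $\Lambda^p(\CC^m)\ot S^q(\CC^n)$.
\end{itemize}
The sign check you flagged does go through. For $\si$ stabilizing $J$, the product $sgn(\si)(-1)^{\bar J_\si}$ is the sign of $\si$ restricted to the positions carrying even indices. Hence $\langle J|\Ec^{(1^k)}|J\rangle$ equals $\alpha(J)/k!$ when the even indices of $J$ are distinct and $0$ otherwise. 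Summing over the $k!/\alpha(J)$ rearrangements of a fixed content gives $1$ or $0$, as you predicted. Your Step 3 also makes explicit the dual Jacobi--Trudi identity that the paper uses silently when it expands the ratio of characteristic polynomials.

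As for what each approach buys: the paper's route is shorter and ties $\alpha_k$ directly to the characteristic function $\Ber(tI-X)$. Yours needs no Berezinian theory at all. Its mechanism (invariance plus evaluation on a diagonal) also applies verbatim to $str_{1,\ldots,r}\Ec^{\lambda}_{\Tc}X_1\cdots X_r$ for every $\lambda$, where it produces the supercharacter of the covariant module. Combined with the Berele--Regev character formula, it would therefore yield Theorem \ref{t:Litt3} without passing through the Goulden--Jackson identities.
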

\begin{proof}
It follows from identity \eqref{UMU-1} with the multiplicative property of Berezinian and definition of Schur supersymmetric polynomials that the characteristic function
(the Berezinian of $(tI -X)$) can be written by supersymmetric polynomials:
\beq
\bal
\Ber(tI-X)&=\frac{\sum_{i=0}^{m}(-1)^i e_i(\omega_1,\ldots,\omega_m)t^{m-i}}
{\sum_{j=0}^{n}(-1)^j e_j (\varpi_1,\ldots,\varpi_n)t^{n-j}}\\
&=\sum_{k=0}^{\infty}(-1)^{k} \mathbb S_{(1^k)}(\omega_1,\ldots,\omega_m,-\varpi_1,\ldots,-\varpi_n)t^{m-n-k}.
\eal
\eeq

On the other hand, the Berezinian $\Ber(tI-M)$ can   be written as
\beq\label{Ber-str}
\Ber(tI-X)=\sum_{k=0}^{\infty}(-1)^{k}\alpha_kt^{m-n-k},
\eeq
see \cite{Ber,KV,Sch}. Generally, Molev and Ragoucy proved the noncommutative analogues for super Manin matrices in \cite{MRa}.
\end{proof}

Similar to the classic case, the super-immanants of generator supermatrix of coordinate superalgebra can be expressed as a Schur supersymmetric polynomials at particular arguments. Explicitely, we have the super analog of Littlewood correspondence III \cite{Li}[p. 121].
\begin{theorem}\label{t:Litt3}
   Let $\lambda \vdash r$ and $\omega_1,\ldots,\omega_m, \varpi_1,\ldots,\varpi_n$ be the eigenvalues of generator supermatrix $M$, then
\begin{equation}\label{Littlewood3}
\mathbb{S}_{\lambda}(\omega_1,\ldots,\omega_m,-\varpi_1,\ldots,-\varpi_n)
=str_{1,\ldots,r}\Ec^{\lambda} X_1\ldots X_r= \sum_{|I|=r} \frac{\Imm_{\chi^{\lambda}}(X_I)}{\alpha(I)},
\end{equation}
where the sum is taken over all non-decreasing ordered multisets $I$ of $[m+n]$  satisfying $|I|=r$.
In particular,
\beq
\bal
&\beta_r=\mathbb{S}_{r}(\omega_1,\ldots,\omega_m,-\varpi_1,\ldots,-\varpi_n),\\
&\alpha_r=\mathbb{S}_{(1^r)}(\omega_1,\ldots,\omega_m,-\varpi_1,\ldots,-\varpi_n),\\
&\gamma_r=strX^{[r]}=p_{m,n}^{(r)}(\omega_1,\ldots,\omega_m,-\varpi_1,\ldots,-\varpi_n).
\eal
\eeq
\end{theorem}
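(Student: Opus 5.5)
The plan is to combine the super Goulden--Jackson identity (Theorem \ref{quantum-JT}) with Proposition \ref{alpha-sym} and the dual Jacobi--Trudi formula for Schur supersymmetric polynomials. The second equality in \eqref{Littlewood3} is already part of Theorem \ref{quantum-JT}: it gives $str_{1,\ldots,r}\Ec^{\lambda}_{\Tc} X_1\cdots X_r=\sum_{|I|=r}\Imm_{\chi^{\lambda}}(X_I)/\alpha(I)=\det(A)$ with $A=(\alpha_{\lambda^T_i-i+j})$. So the whole task reduces to identifying $\det(A)$ with $\mathbb S_\lambda$ at the eigenvalues.

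First, write $z=(\omega_1,\ldots,\omega_m,-\varpi_1,\ldots,-\varpi_n)$. Proposition \ref{alpha-sym} gives $\alpha_k=\mathbb S_{(1^k)}(z)$ for all $k>0$. This is also consistent for $k=0$ and $k<0$ by the conventions $\alpha_0=1$ and $\alpha_k=0$. Since $\mathfrak B_{m|n}$ is commutative, and the $\omega_i,\varpi_j$ lie in a commutative extension (by Lemma \ref{diag-lem}, with nilpotent corrections), substituting into a determinant is legitimate. Hence $\det(A)=\det\bigl(\mathbb S_{(1^{\lambda^T_i-i+j})}(z)\bigr)$.

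Second, I will invoke the dual (N\"agelsbach--Kostka) Jacobi--Trudi identity in $\textbf{Sym}^{(m|n)}$: $\mathbb S_\lambda=\det(\mathbb S_{(1^{\lambda^T_i-i+j})})$. This follows from the classical one via the specialization homomorphism $h_k\mapsto \mathbb S_k$ from the ring of symmetric functions. Under it, the generating series $E(t)=\sum e_k t^k$ goes to $\prod_i(1+x_it)\prod_j(1-y_jt)^{-1}$, because $E(t)H(-t)=1$ and $S_{m,n}(-t)^{-1}$ has exactly this form. Evaluating at $x=\omega$ and $y=-\varpi$ gives $e_k\mapsto\mathbb S_{(1^k)}(z)$, in the convention matching Proposition \ref{alpha-sym}. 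The identity $s_\lambda=\det(e_{\lambda^T_i-i+j})$ is polynomial in the $h$'s, so it survives the specialization, and the image of $s_\lambda$ is $\mathbb S_\lambda$ by the definition via $\det(\mathbb S_{\lambda_i-i+j})$. This proves the main identity.

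Third, the particular cases follow directly. Take $\lambda=(1^r)$ to recover $\alpha_r$. For $\lambda=(r)$, use $\det(B)=\beta_r$, which holds since $B$ is $1\times1$; Theorem \ref{quantum-JT} gives $\det(B)=\det(A)$, hence $\beta_r=\mathbb S_r(z)$. Alternatively, Theorem \ref{Mac} gives $\si(t)=\lambda(-t)^{-1}$, which matches $S_{m,n}(t)=E(-t)^{-1}$. For $\gamma_r$, compare Theorem \ref{Newton-iden}: $\psi(t)=\partial_t\log\si(t)$. The classical Newton relation $\partial_t\log H(t)=\sum p_{k+1}t^k$ specializes to the power sums $p^{(k)}_{m,n}$ evaluated at the same variables as above. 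So $\gamma_r=p^{(r)}_{m,n}(z)$ after checking the sign convention $(-1)^{r-1}y^r$.

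The main obstacle is bookkeeping of signs and conventions. I must check that the argument pattern $(\omega,-\varpi)$ in Proposition \ref{alpha-sym} is exactly the specialization sending $e_k$ to $\mathbb S_{(1^k)}$, consistently for $\alpha$, $\beta$ and $\gamma$. I must also check that $\det(A)$, computed in the commutative algebra $\mathfrak B_{m|n}$, is unaffected by the nilpotent parts $\mu_i,\nu_j$. I would first verify the case $r=1,2$ with $m=n=1$ directly from $\Ber(tI-X)$ to fix signs.
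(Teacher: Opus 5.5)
Your proposal is correct and follows essentially the paper's proof: Proposition \ref{alpha-sym}, the super Goulden--Jackson identity (Theorem \ref{quantum-JT}) and a Jacobi--Trudi identity give the main formula, and Newton's identities give $\gamma_r$. The only difference is the route to the Jacobi--Trudi step. You pass through $\det(A)$ and justify the dual Jacobi--Trudi identity via the specialization $h_k\mapsto\mathbb S_k$. The paper instead writes its chain through $\det(B)$, tacitly using $\beta_k=\mathbb S_k(\omega,-\varpi)$, and compresses this step into ``the definition of Schur supersymmetric polynomials''; your version makes that step explicit and then derives $\beta_r$ as the case $\lambda=(r)$.
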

\begin{proof}
From proposition \ref{alpha-sym},  we have that $$\alpha_r=\mathbb{S}_{(1^r)}(\omega_1,\ldots,\omega_m,-\varpi_1,\ldots,-\varpi_n).$$
Hence, we can reduce \eqref{Littlewood3} by the super-Goulden-Jackson identities \eqref{GJ-identity} and the definition of Schur supersymmetric polynomials:
\beq
\bal
&\mathbb{S}_{\lambda}(\omega_1,\ldots,\omega_m,-\varpi_1,\ldots,-\varpi_n)\\
&=\sum_{\mu}K_{\lambda,\mu}^{-1}\beta_{\mu_1}\cdots \beta_{\mu_{\lambda_1}}\\
&=\sum_{\mu}K_{\lambda^{T},\mu}^{-1}\alpha_{\mu_1}\cdots \alpha_{\mu_{\lambda_1}}\\
&=\sum_{I}\frac{ \Imm_{\chi^{\lambda}}(X_I)}{\alpha(I)}.
\eal
\eeq

Finally, by Newton's identities in theorem \ref{Newton-iden}, $\gamma_r=strX^{[r]}$ can be expressed as the same linear combination of $\alpha_{\mu}, \mu \vdash r$ as $p_{m,n}^{(r)}$.
\end{proof}
\begin{remark}
The Littlewood correspondence III also holds for these  separable supermatrices defined in \cite{KN84,KN88}. In fact, any separable supermatrix $X$  can be diagonalized by the same argument in Lemma \ref{diag-lem}.
\end{remark}

Consider the algebra homomorphism $\Phi: A(\Mat_{m|n}) \rightarrow \mathbb{C}[\mathcal{X}_m, \mathcal{Y}_n]$ defined as
\ben
\bal
&x_{ij}\mapsto 0, \quad i\neq j, \\
&x_{ii}\mapsto x_i, \quad 1 \leq i \leq m,\\
&x_{jj}\mapsto -y_j, \quad  1 \leq j \leq n.
\eal
\een
\begin{theorem}\label{iso-sym} The restriction of $\Phi$:
  $$\Phi |_{\mathfrak{B}_{m|n}} :\mathfrak{B}_{m|n}\rightarrow \textbf{Sym}^{(m|n)}$$
  is an algebra isomorphism. And
  $\{ \sum_{I} \frac{ \Imm_{\chi^{\lambda}}(X_I)}{\alpha(I)}\mid \lambda \in H(m,n)\}$ is a basis of $\mathfrak{B}_{m|n}$.
\end{theorem}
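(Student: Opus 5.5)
We will first compute $\Phi$ on the generators $\alpha_k$ and then use the known structure of $\textbf{Sym}^{(m|n)}$.

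\emph{Step 1: $\Phi(\alpha_k)$.} By \eqref{genefcn1}, $\alpha_k=\sum_{|I|=k}\Imm_{\chi^{(1^k)}}(X_I)/\alpha(I)$, and each super-immanant is a sum of monomials $x_{i_{\tau(1)},i_1}\cdots x_{i_{\tau(k)},i_k}$. Under $\Phi$ every off-diagonal $x_{ij}$ goes to $0$. Hence only monomials with $i_{\tau(a)}=i_a$ for all $a$ survive, i.e.\ $\tau$ lies in the Young subgroup $\Sym_I$. For such a $\tau$ the monomial is $\prod_a x_{i_ai_a}$, and the sign $(P_\tau)^{I}_{I}$ is the super-sign of permuting equal basis vectors. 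Odd vectors anticommute, so this sign is $\mathrm{sgn}$ restricted to the odd blocks of $\Sym_I$. Combining this with $\chi^{(1^k)}=\mathrm{sgn}$ and summing over $\Sym_I$ gives the following. The even blocks contribute $\sum\mathrm{sgn}=0$ unless every even multiplicity is at most $1$. The odd blocks give $\sum\mathrm{sgn}^2=\alpha_j!$, which cancels against $\alpha(I)$.

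This shows that $\Phi(\alpha_k)$ is the generating coefficient of $\prod_i(1+x_it)\prod_j(1-(-y_j)t)^{-1}$, with the signs coming from $(-1)^{\bar I}$ and $\Phi(x_{jj})=-y_j$ to be checked. That coefficient is exactly $\mathbb S_{(1^k)}(\mathcal X_m,\mathcal Y_n)$, i.e.\ $\omega(\mathbb S_k)$. A quicker route is to apply $\Phi$ to Proposition \ref{alpha-sym}: $\Phi(X)$ is already diagonal, so its eigenvalues are $\omega_i=x_i$ and $\varpi_j=-y_j$, and then $\Phi(\alpha_k)=\mathbb S_{(1^k)}(x,y)$ at once. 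Likewise Theorem \ref{t:Litt3} gives
\[
\Phi\Big(\sum_I \Imm_{\chi^\lambda}(X_I)/\alpha(I)\Big)=\mathbb S_\lambda(\mathcal X_m,\mathcal Y_n).
\]

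\emph{Step 2: image and surjectivity.} The $\mathbb S_{(1^k)}$ lie in $\textbf{Sym}^{(m|n)}$, so $\Phi(\mathfrak B_{m|n})\subset\textbf{Sym}^{(m|n)}$. It is classical (Stembridge; Macdonald) that $\textbf{Sym}^{(m|n)}$ is generated by the $\mathbb S_{(1^k)}$, equivalently by the $\mathbb S_k$ via the super MacMahon relation of Theorem \ref{Mac}. Thus $\Phi|_{\mathfrak B_{m|n}}$ is surjective. It is also classical that $\{\mathbb S_\lambda\mid\lambda\in H(m,n)\}$ is a basis of $\textbf{Sym}^{(m|n)}$, and that $\mathbb S_\lambda=0$ for $\lambda\notin H(m,n)$.

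\emph{Step 3: injectivity, the main obstacle.} Set $s_\lambda=\sum_I\Imm_{\chi^\lambda}(X_I)/\alpha(I)$. By Theorem \ref{quantum-JT}, $s_\lambda=\det(A)$ is a polynomial in the $\alpha_k$ given by the dual Jacobi–Trudi formula. Every monomial $\alpha_{\mu_1}\cdots\alpha_{\mu_l}$ can be inverted by unitriangularity of the Kostka matrix, so $\mathfrak B_{m|n}$ is spanned by all $s_\lambda$. Moreover $s_\lambda=0$ for $\lambda\notin H(m,n)$ by Proposition \ref{imm-char}. Hence $\{s_\lambda\mid\lambda\in H(m,n)\}$ spans $\mathfrak B_{m|n}$, and its image $\{\mathbb S_\lambda\mid\lambda\in H(m,n)\}$ is linearly independent. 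A spanning set mapped to an independent set is itself independent, and $\Phi$ is then injective on its span. So $\Phi|_{\mathfrak B_{m|n}}$ is a linear bijection and an algebra homomorphism, and the $s_\lambda$, $\lambda\in H(m,n)$, form a basis.

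The delicate points are, first, the sign bookkeeping in Step 1, if one avoids citing Proposition \ref{alpha-sym}. Second, one must justify that Theorem \ref{t:Litt3} commutes with $\Phi$, i.e.\ that the eigenvalues specialize correctly. This holds because both sides of Theorem \ref{t:Litt3} are polynomial identities in $\alpha_k$ obtained through the Berezinian. Therefore $\Phi(\Ber(tI-X))=\prod(t-x_i)/\prod(t+y_j)$ suffices, and no diagonalization is needed after specialization.
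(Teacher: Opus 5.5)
Your proof is correct. It follows the paper's three-step outline: compute $\Phi(\alpha_k)$, get surjectivity from the fundamental theorem on supersymmetric polynomials, and get injectivity from the Schur basis. The difference is in how you justify injectivity.

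\textbf{How the paper handles injectivity.} It gives one line, citing the super Littlewood correspondence III (Theorem \ref{t:Litt3}). That is the eigenvalue formula $s_\lambda=\mathbb S_\lambda(\omega,-\varpi)$, which depends on the diagonalization Lemma \ref{diag-lem} and Proposition \ref{alpha-sym}. Implicitly, $\mathfrak B_{m|n}$ is the image of $\textbf{Sym}^{(m|n)}$ under evaluation at the eigenvalues, and $\Phi$ is a left inverse of that evaluation. This gives $s_\lambda=0$ for $\lambda\notin H(m,n)$ for free, since $\mathbb S_\lambda=0$ there. The cost is that one must make sense of $\Phi$ relative to the extension in which the eigenvalues live.

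\textbf{How your argument differs.} You transport the formal identities of the ring of symmetric functions into $\mathfrak B_{m|n}$ via $e_k\mapsto\alpha_k$, using Theorem \ref{quantum-JT}. This shows the $s_\lambda$ span $\mathfrak B_{m|n}$ and that $\Phi(s_\lambda)=\mathbb S_\lambda(\mathcal X_m,\mathcal Y_n)$. You take the vanishing of $s_\lambda$ outside $H(m,n)$ from Proposition \ref{imm-char}. That vanishing is also immediate from Berele--Regev, since $\chi^\lambda$ acts by zero on $(\mathbb C^{m|n})^{\otimes r}$ for such $\lambda$. Your route avoids eigenvalues entirely and makes the spanning-plus-independence argument explicit, so it is the cleaner of the two.

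\textbf{On the sign of $\Phi(\alpha_k)$.} Your result $\Phi(\alpha_k)=\mathbb S_{(1^k)}(\mathcal X_m,\mathcal Y_n)$ has the right sign. Check $k=1$: $\Phi(str X)=\sum_i x_i+\sum_j y_j$. The paper writes $\mathbb S_{(1^k)}(\mathcal X_m,-\mathcal Y_n)$, which would not even lie in $\textbf{Sym}^{(m|n)}$ under the paper's convention that $x_1=t,\ y_1=-t$ cancels.

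\textbf{Two small presentational points.}
\begin{enumerate}
\item In your direct computation, the displayed factor $\prod_j(1-(-y_j)t)^{-1}$ has the wrong sign on its own. It is the extra $(-1)^{\bar I}=(-1)^b$ that turns $h_b(-y)$ into $h_b(y)$, so your ``to be checked'' step does come out right.
\item Passing between generation by the $\mathbb S_k$ and by the $\mathbb S_{(1^k)}$ is just the polynomial identity between their generating functions. Theorem \ref{Mac} concerns $\mathfrak B_{m|n}$, not the polynomial ring, so it is not the relevant citation there.
\end{enumerate}
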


\begin{proof}
It is easy to see that $\Phi(\alpha_k)=\mathbb{S}_{(1^k)}(\mathcal{X}_m, -\mathcal{Y}_n)$. So the map $\Phi |_{\mathfrak{B}_{m|n}}$ is an epimorphism by the fundamental theorem of supersymmetric polynomials.
Because of  the super Littlewood correspondence III, $\Phi|_{\mathfrak{B}_{m|n}}$ is an isomorphism. So $\{ \sum_{I} \frac{ \Imm_{\chi^{\lambda}}(X_I)}{\alpha(I)}\mid \lambda \in H(m,n)\}$ is a basis of $\mathfrak{B}_{m|n}$.
\end{proof}

The following theorem can be viewed as a generalization of the relations between  Schur polynomials and power-sum polynomials in \cite{Li}.
\begin{theorem}\label{rela-SchurPower}
Let $\lambda \vdash r$, then
$$\sum_{|I|=r} \frac{ \Imm_{\chi^{\lambda}}(X_I)}{\alpha(I)}=\frac{\Imm_{\chi^{\lambda}} (\Gamma_r)}{r!},$$
where the immanants on the  right hand side are the classical immanants for non-super matrices  and the lower Hessenberg matrix  $\Gamma_r$ is
$$
\Gamma_r=\begin{pmatrix}
\gamma_1 & 1 &  & &  \\
\gamma_2 & \gamma_1 & 2 &  & \\
\gamma_3 & \gamma_2 & \gamma_1 & 3 & \\
\vdots & \vdots & \ddots & \ddots &  \ddots\\
\gamma_r & \gamma_{r-1} & \cdots & \cdots & \gamma_1
\end{pmatrix}.$$
\end{theorem}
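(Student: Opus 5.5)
We reduce the statement to a classical symmetric function identity by means of Theorem \ref{t:Litt3} and the isomorphism of Theorem \ref{iso-sym}. By Theorem \ref{t:Litt3}, the left-hand side equals $\mathbb{S}_{\lambda}(\omega_1,\ldots,\omega_m,-\varpi_1,\ldots,-\varpi_n)$. Moreover, $\gamma_k=p^{(k)}_{m,n}(\omega,-\varpi)$ for every $k$. The right-hand side $\Imm_{\chi^\lambda}(\Gamma_r)/r!$ is a polynomial in $\gamma_1,\ldots,\gamma_r$ with rational coefficients. It therefore suffices to prove the identity
\[
s_\lambda=\frac{1}{r!}\Imm_{\chi^\lambda}(\Gamma_r(p_1,\ldots,p_r))
\]
in the ring $\Lambda$ of symmetric functions, where $\Gamma_r(p)$ is the matrix $\Gamma_r$ with $\gamma_k$ replaced by $p_k$. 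We then specialize $\Lambda\to\textbf{Sym}^{(m|n)}$ via $p_k\mapsto p^{(k)}_{m,n}$, which sends $s_\lambda\mapsto\mathbb S_\lambda$. This specialization is a ring homomorphism, since $\mathbb S_k$ is the image of $h_k$ under it (compare the generating function $S_{m,n}(t)$ with $\exp\sum p_k t^k/k$). Finally we evaluate at $(\omega,-\varpi)$; equivalently, we transport the identity through $\Phi|_{\mathfrak B_{m|n}}$, using that $\Phi$ is injective and sends the $\gamma_k$ and the Schur sums to the corresponding supersymmetric functions.

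For the classical identity we follow Littlewood's argument. We expand the immanant as
\[
\Imm_{\chi^\lambda}(\Gamma_r)=\sum_{\sigma\in\Sym_r}\chi^\lambda(\sigma)\prod_i(\Gamma_r)_{i,\sigma(i)}.
\]
Because $\Gamma_r$ is lower Hessenberg, a permutation contributes only if $\sigma(i)\le i+1$ for all $i$. Such permutations are exactly products of cycles on consecutive blocks $[a,b]$, each acting as $a\mapsto a+1\mapsto\cdots\mapsto b\mapsto a$. For such a block the corresponding factor is $\gamma_{b-a+1}\cdot a(a+1)\cdots(b-1)$, and the cycle type of $\sigma$ is the composition of $r$ given by the block lengths. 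Grouping the terms by cycle type $\rho$, one must show
\[
\sum_{\text{compositions with parts }\rho}\ \prod_{\text{blocks}}\frac{(b-1)!}{(a-1)!}=\frac{r!}{z_\rho}.
\]
This is the count of permutations of cycle type $\rho$ obtained by building a permutation by inserting $1,2,\ldots,r$ successively: at each step the new element either starts a new cycle (weight $\gamma_1$ on the diagonal) or is inserted after one of the $i$ existing elements, which corresponds to the superdiagonal entry $i$. Equivalently, it is the standard fact that $\det$ of the analogous matrix gives $r!\,e_r$, and the same bijection works for any class function. Granting this, the right-hand side equals $\frac{1}{r!}\sum_\rho \frac{r!}{z_\rho}\chi^\lambda(\rho)p_\rho=s_\lambda$ by the Frobenius formula.

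The main obstacle is the combinatorial lemma identifying the weighted count of block decompositions with $r!/z_\rho$. The cleanest route we would try is induction on $r$ via expansion along the last row of $\Gamma_r$. Removing the last block of length $k$ gives a factor $\gamma_k\,(r-1)!/(r-k)!$. Under the Chinese-restaurant bijection this matches the probability that the element $r$ lies in a cycle of length $k$, namely $(r-1)!/(r-k)!$ times the number of permutations of the remaining $r-k$ elements. A secondary point is to check the sign conventions in the specialization: the variables are $-\varpi_j$ and $p^{(k)}_{m,n}$ carries the factor $(-1)^{k-1}$. This is consistent with $\Lambda\to\textbf{Sym}^{(m|n)}$ being the superization $p_k\mapsto\sum x_i^k-\sum(-y_j)^k$, which we verify directly on the generating functions.
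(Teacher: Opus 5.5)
Your proposal is correct and has the same skeleton as the paper's proof. Both reduce, via the super Goulden--Jackson identity and Littlewood correspondence III, to the identity $r!\,\mathbb S_\lambda=\Imm_{\chi^\lambda}(\mathfrak P_r)$, where $\mathfrak P_r$ is $\Gamma_r$ with $\gamma_k$ replaced by $p^{(k)}_{m,n}$. The paper transports this identity through the isomorphism $\Phi|_{\mathfrak B_{m|n}}$; you transport it by evaluating at $(\omega,-\varpi)$ using Theorem \ref{t:Litt3}.

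The difference is that the paper only asserts this identity, apart from the determinant case $\alpha_r=\det(\Gamma_r)/r!$, which it gets from Newton's identities. You actually prove it, which makes your version self-contained. Your proof uses:
\begin{itemize}
\item the block-cycle structure of permutations supported on a lower Hessenberg matrix;
\item the weighted count $r!/z_\rho$;
\item the Frobenius formula;
\item the specialization $p_k\mapsto p^{(k)}_{m,n}$, which sends $h_k\mapsto\mathbb S_k$.
\end{itemize}

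One caution: the ``insert after an existing element'' picture is not a type-preserving bijection onto block decompositions. For $r=4$, the composition $(1,3)$ has weight $6$ and type $(3,1)$. But two of the six permutations with cycle minima $\{1,2\}$ have type $(2,2)$. So rely instead on your last-block recursion, where $(r-1)!/(r-k)!$ is the number (not a probability) of $k$-cycles through $r$. Alternatively, use Foata's fundamental transformation, which matches blocks with the segments between left-to-right maxima.
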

\begin{proof}
By the Newton identities in Theorem \ref{Newton-iden} and Cramer's rule, we have that
\[
\alpha_{r}=\frac{\det(\Gamma_r)}{r!}.
\]
Moreover, the Schur supersymmetric polynomials can be written as
\[
r!\ \mathbb{S}_{\lambda}=\Imm_{\chi^{\lambda}}(\mathfrak{P}_r),
\]
where the matrix $\mathfrak{P}_r$ is  obtained by replacing  $\gamma_i$ with the power sum supersymmetric  polynomial $p_{m|n}^{(i)}$ in matrix $\Gamma_r$. According to the isomorphism $\Phi|_{\mathfrak{B}_{m|n}}$ and  the super-Goulden-Jackson identities, we have that
\ben
\bal
\sum_{|I|=r} \frac{ \Imm_{\chi^{\lambda}}(X_I)}{\alpha(I)}=
\sum_{\mu}K_{\lambda^{T},\mu}^{-1}\alpha_{\mu_1}\cdots \alpha_{\mu_{\lambda_1}}
=\frac{\Imm_{\chi^{\lambda}}(\Gamma_r)}{r!}.
\eal
\een
\end{proof}

%

\bibliographystyle{amsalpha}

\end{document}